\newtheorem{theorem}{Theorem}[section]
\newtheorem{lemma}[theorem]{Lemma}
\newtheorem{corollary}[theorem]{Corollary}
\newtheorem{question}[theorem]{Question}
\theoremstyle{definition}
\newtheorem{definition}[theorem]{Definition} 
\newtheorem{example}[theorem]{Example}
\newcommand{\qand}{\quad \mbox{and} \quad}
\newcommand{\qforall}{\quad \mbox{for all} \quad}
\newcommand{\qwhere}{\quad \mbox{where} \quad}
\newcommand{\qor}{\quad \mbox{or} \quad}
\newcommand{\st}{\colon}
\newcommand{\MP}{\mathcal{P}}
\newcommand{\A}{\mathcal{A}}
\newcommand{\bm}{\mathbf{m}}
\newcommand{\lk}{\mathrm{lk}}
\title{Cohen--Macaulay Squares of edge ideals}
\author[S.~Faridi]{Sara Faridi}\thanks{Faridi's research is supported by an NSERC Discovery Grant 2023-05929. Both authors are part of an AIM SQuaRE collaborative group. We are grateful to AIM for providing an opportunity for us to start this research.  The present paper was written while the second author was visiting Dalhousie University, March--April 2025.  We are grateful for the support and hospitality of  Dalhousie University.}
\address{(Sara Faridi) Department of Mathematics \& Statistics, Dalhousie University, 6297 Castine Way, PO BOX 15000, Halifax, NS, Canada B3H 4R2}
\email{faridi@dal.ca}
\author[T.~Hibi]{Takayuki Hibi}
\address{(Takayuki Hibi) Department of Pure and Applied Mathematics, Graduate School of Information Science and Technology, Osaka University, Suita, Osaka 565--0871, Japan}
\email{hibi@math.sci.osaka-u.ac.jp}
\begin{document}

\subjclass[2020]{06A11, 13D02.}

\keywords{Cohen--Macaulay ring, simplicial complex, finite graph, edge ideal, powers of ideals}

\begin{abstract}
Let $G$ be a finite graph and $I(G)$ its edge ideal. We give a full description of the Stanley--Reisner complex of the polarization of $I(G)^2$, naturally  introducing the tools of  Stanley--Reisner theory in the study of the algebraic behaviour of powers of edge ideals.  As an application, we demonstrate how  Reisner's criterion can be applied directly to check if $I(G)^2$ is Cohen--Macaulay. We can show that if  $G$ belongs to the class of finite graphs which consists of cycles, whisker graphs, trees, connected chordal graphs and connected Cohen--Macaulay bipartite graphs, then the square $I(G)^2$ is Cohen--Macaulay if and only if either $G$ is the pentagon, the cycle of length $5$, or $G$ consists of exactly one edge.
\end{abstract}

\maketitle
\thispagestyle{empty}
\maketitle

%%%%%%%%%%%%%%%%%%%%%%%%%%%%%%%%%%%%%%%%%%%%%%%%%%
\section*{introduction}
 
 Stanley--Reisner theory provides a powerful tool to study homological invariants of squarefree monomial ideals via simplicial homology. One of the main motivations of this paper is to extend the reach of Stanley--Reisner tools to the study of powers of squarefree monomial ideals. We take a step in this direction by considering a squarefree monomial ideal $I$ generated in degree $2$, that is, the {\it edge ideal} of a graph $G$. Edge ideals of graphs are studied heavily 
 from the point of view of the combinatorics of the graph. In Stanley--Reisner theory they are studied in the context of  {\it flag} simplicial complexes. From either point of view, the combinatorics of the underlying graph $G$ has a significant bearing on the algebraic properties of $I$. But once we take powers of $I$, the ideal $I^r$ is no longer squarefree, and we quickly leave the realm of Stanley--Reisner theory. 

A widely used method to turn questions about general monomial ideals into questions about squarefree ones is {\it polarization}, but there is no known systematic description of the polarization of powers of monomial ideals.

 In this paper, starting from the edge ideal $I(G)$ of a graph $G$, we give a full description of the polarization $\MP(I(G)^2)$. We then construct the Stanley--Reisner complex of the square-free monomial ideal $\MP(I(G)^2)$, describing each facet in terms of the combinatorial structure of the graph $G$. In doing so, we simultaneously extend the techniques  
of graph theory and introduce the tools of Stanley--Reisner theory in the study of the powers of edge ideals.

 As an application, we address the question: when is the square $I(G)^2$ of an edge ideal $I(G)$  Cohen--Macaulay?  The Cohen-Macaulay property is nearly impossible  to characterize combinatorially,
 but for square-free monomial ideals there is a criterion given by Reisner \cite{Rei} in the language of algebraic topology on simplicial complexes.  Reisner's criterion was dramatically used by Stanley \cite{Sta} for solving affirmatively the upper bound conjecture for the number of faces of simplicial spheres.  But even for an ideal generated by quadratic squarefree monomials, in other words the edge  ideal of a graph, it is in general rather difficult to determine the Cohen--Macaulay property, though for some classes complete characterizations are known (\cite{HHbipartite, HHZchordal, V}).

Our paper is organized as follows. We  recall fundamental concepts on finite graphs and simplicial complexes in \cref{sec:1}. In \cref{sec:2}, we explicitly describe the structure of the Stanley--Reisner complex of the square $I(G)^2$ of the edge ideal of a finite graph $G$ in terms of $G$ (\cref{t:facets}). We then give a criterion for the Stanley--Reisner complex of $I(G)^2$ to be pure (\cref{t:pure}).  This leads to a  necessary condition on $G$ in order for $I(G)^2$ to be Cohen--Macaulay (\cref{t:nice}). As a result, we show that if $G$ is a  cycle, a whiskered graph, a tree, a connected chordal graph, or  a connected Cohen--Macaulay bipartite graph, then $I(G)^2$ is not Cohen--Macaulay, unless  $G$ is a pentagon or $G$ consists of exactly one edge (\cref{c:cycles} and \cref{c:aaaaa}).

Our  results on the Cohen--Macaulayness of $I(G)^2$ also recover earlier works on this topic. The authors in \cite{HT} prove that $I(G)^2$ is Cohen--Macaulay if and only if $G$ is a triangle-free Gorenstein graph, and in \cite{HRT} that $I(G)^2$ is Cohen--Macaulay if and only if $I(G)^2$ satisfies the Serre condition (S$_2$).  Furthermore, in \cite[Section 4]{HRT}, a classification of finite graphs $G$ for which $I(G)^2$ is Cohen--Macaulay with dimension less than $5$ is given.  On the other hand, in \cite{RTY}, examples of Stanley--Reisner ideals whose second powers are Cohen--Macaulay are provided. In \cite{HT}, finite graphs being well-covered together with being a member of the $W_2$ class play an important role.  These notions also appear implicitly in our argument using Reisner's criterion \cite{Rei}.    
We thank the referee for informing us about the references \cite{HT, HRT}, and many helpful comments which improved this paper.

%%%%%%%%%%%%%%%%%%%%%%%%%%%%%%%%%%%%%%%%%
\section{Finite graphs and simplicial complexes}\label{sec:1}
%%%%%%%%%%%%%%%%%%%%%%%%%%%%%%%%%%%%%%%%%

First, we recall fundamental facts on finite graphs.  Let $G$ be a finite graph on the vertex set $V(G)$ with no loop, no multiple edge and no isolated vertex and $E(G)$ the set of edges of $G$.  

A subset $A$ of $V(G)$ is called an {\bf independent set} if $v$ and $w$ belong to $A$ with $v \neq w$, then $\{v,w\} \not\in E(G)$.  Let $\A(G)$ denote the set of maximal independent sets of $G$.  The {\bf independence number} of $G$, denoted by $\alpha(G)$, is the largest possible cardinality of an independent set of $G$.  We say that $G$ is {\bf unmixed} if all maximal independent sets have the same cardinality.

Let $v$ be a vertex of $G$.  The {\bf neighborhood} of $v$ in $G$ is the set $N_G(v)$ of vertices adjacent to $v$.
$$N_G(v)=\{w \in V(G) \st \{v,w\} \in E(G)\}$$
The {\bf closed neighborhood} of $v$ in $G$ is the set $N_G[v]$ of vertices adjacent to $v$ together with $v$ itself:
$$N_G[v]=N_G(v) \cup \{v\}.$$
The closed neighborhood of a subset $W \subseteq V(G)$ is defined as 
$$N_G[W]=\bigcup_{w \in W} N_G[w].$$

If $U \subseteq V(G)$, then the  {\bf induced subgraph} $G|_U$ of $G$ on $U$ is the  subgraph of $G$ whose vertex set is $U$, and whose edge set is all the edges of $G$ with both vertices in $U$:
$$E(G|_U)=\{\{u,v\} \st  \ u,v \in U \qand \{u,v\} \in E(G)\}.$$
The {\bf deletion} of $U$ from $G$ is the induced subgraph $$G \setminus U = G_{V(G) \setminus U}$$ of $G$ on $V(G) \setminus U$, which may include isolated vertices.

A {\bf star} in $G$ is an induced subgraph consisting of edges $\{a^1,b\}, \ldots,\{a^t,b\}$, where the vertex $b$ is the {\bf center} of the star.

A {\bf leaf} of $G$ is a free vertex, that is, a vertex which belongs to exactly one edge of $G$ called a {\bf leaf edge}. 

\begin{example}\label{ex:stars} For the graph $G$ in \cref{fig:stars}~(left), the induced subgraph on $\{x,y,z,w\}$ (center) is not a star as it contains a triangle, while the induced subgraph on $\{x,y,u,w\}$ (right) is a star centered at $x$. 
\begin{figure}[ht!]
\begin{tabular}{ccc}
\begin{tikzpicture}[scale=.8]
\tikzstyle{point}=[inner sep=0pt]
\node[label=left:{\small$z$}] (z) at  (0,0) {};
\node[label=below:{\small$x$}] (x) at (2,0) {};
\node[label=above:{\small$y$}] (y) at (1,1) {};
\node[label=right:{\small$w$}] (w) at (3.5,0) {};
\node[label=above:{\small$u$}] (u) at (2.5,1) {};
\node[label=above:{\small$v$}] (v) at (4,1) {};
\draw (z.center) -- (x.center);
\draw (x.center) -- (y.center);
\draw (x.center) -- (w.center);
\draw (x.center) -- (u.center);
\draw (w.center) -- (v.center);
\draw (z.center) -- (y.center);
\foreach \i in {x,y,z,u,v,w} {\draw[black, fill=black] (\i) circle(.05);}
\end{tikzpicture}
&
\begin{tikzpicture}[scale=.8]
\tikzstyle{point}=[inner sep=0pt]
\node[label=left:{\small$z$}] (z) at  (0,0) {};
\node[label=below:{\small$x$}] (x) at (2,0) {};
\node[label=above:{\small$y$}] (y) at (1,1) {};
\node[label=right:{\small$w$}] (w) at (3.5,0) {};

\draw (z.center) -- (x.center);
\draw (x.center) -- (y.center);
\draw (x.center) -- (w.center);
%\draw (x.center) -- (u.center);
%\draw (w.center) -- (v.center);
\draw (z.center) -- (y.center);
\foreach \i in {x,y,z,w} {\draw[black, fill=black] (\i) circle(.05);}
\end{tikzpicture}
&\begin{tikzpicture}[scale=.8]
\tikzstyle{point}=[inner sep=0pt]
\node[label=below:{\small$x$}] (x) at (2,0) {};
\node[label=above:{\small$y$}] (y) at (1,1) {};
\node[label=right:{\small$w$}] (w) at (3.5,0) {};
\node[label=above:{\small$u$}] (u) at (2.5,1) {};

\draw (x.center) -- (y.center);
\draw (x.center) -- (w.center);
\draw (x.center) -- (u.center);
%\draw (w.center) -- (v.center);
%\draw (z.center) -- (y.center);
\foreach \i in {x,y,u,w} {\draw[black, fill=black] (\i) circle(.05);}
\end{tikzpicture}
\\
The graph&
The induced subgraph on &
The induced subgraph on \\
$G$& $\{x,y,z,w\}$ & $\{x,y,u,w\}$\\
&(non-star)
&(star centered at $x$)\\
\end{tabular}
\caption{\cref{ex:stars}}\label{fig:stars}
\end{figure}

Both vertices $u$ and $v$ are leaves of $G$. If $W=\{u,z\}$, then the closed neighborhood of $W$ is $N_G[W]=\{x,y,z,u\}$. Finally, we have
$$\A(G)=\big \{\{x,v\}, \{y,u,v\}, \{y,u,w\},\{z,u,v\}, \{z,u,w\} \big \}.$$

\end{example}
\begin{definition}[{\bf edge ideal}]
Let $G$ be a finite graph on the vertex set $V=V(G)$.  Let $S=K[v \st v \in V]$ denote the polynomial ring in $|V|$ variables over a field $K$.  The {\bf edge ideal} of $G$ is the monomial ideal $I(G)$ of $S$ generated by those monomials $vw$ with $\{v,w\} \in E(G)$.  
\end{definition}

Second, we recall fundamental facts on simplicial complexes.  Let $V$ be a finite set.  A {\bf simplicial complex} on $V$ is a collection $\Gamma$ of subsets of $V$ for which (i) $\{v\} \in \Gamma$ for each $v \in V$ and (ii) if $\sigma \in \Gamma$ and $\tau \subset \sigma$, then $\tau \in \Gamma$.  Each element $\sigma \in \Gamma$ is called a {\bf face} of $\Gamma$.  A face that is maximal with respect to inclusion is called a {\bf facet} of $\Gamma$.  Let $d$ denote the largest possible cardinality of a face of $\Delta$.  The {\bf dimension} of $\Gamma$ is $\dim \Gamma = d-1$.  A simplicial complex is {\bf pure} if all facets have the same cardinality.  

\begin{definition}[{\bf Stanley--Reisner complex}]
Let $V$ be a finite set and $S=K[v \st v \in V]$ the polynomial ring in $|V|$ variables over a field $K$.  Let $I$ be an ideal of $S$ generated by squarefree monomials of degree $\geq 2$.  The {\bf Stanley--Reisner complex} of $I$ is a simplicial complex $\Gamma_{I}$ on $V$ whose faces are those $\sigma \subset V$ with 
$$\bm_\sigma = \prod_{v \in \sigma} v \not\in I.$$
It follows easily that, given a simplicial complex $\Gamma$ on $V$, there is an ideal $I$ of $S$ generated by squarefree monomials for which $\Gamma = \Gamma_I$.
\end{definition}

\begin{definition}[{\bf Cohen--Macaulay complexes}]
Let $I$ be an ideal of $S$ generated by squarefree monomials and $\Gamma = \Gamma_I$ its Stanley--Reisner complex.  
We say that $\Gamma$ is {\bf Cohen--Macaulay} over $K$ if $I$ is Cohen--Macaulay, i.e. $S/I$ is Cohen--Macaulay. 
\end{definition}

Let $\Gamma$ be a simplicial complex and $\sigma$ a face of $\Gamma$.  Then the {\bf link} of $\sigma$ in $\Gamma$ is the subcomplex
\[
\lk_\Gamma(\sigma) = \{ \tau \in \Gamma \st \sigma \cap \tau = \emptyset, \, \sigma \cup \tau \in \Gamma \}. 
\]
In particular,  $\lk_\Gamma(\emptyset) = \Gamma$.

\begin{theorem}[{\bf Reisner's criterion~\cite{Rei}}]
\label{t:Reisner}
A simplicial complex is Cohen--Macaulay over $K$ if and only if for each face $\sigma$ of $\Delta$ (including $\sigma = \emptyset$), one has
\[
{\tilde H}_i(\lk_\Gamma(\sigma);K) = 0, \qforall i \neq \dim \lk_\Gamma(\sigma).
\]
\end{theorem}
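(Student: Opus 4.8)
The plan is to route the statement through local cohomology, since for a finitely generated graded module over $S$ the Cohen--Macaulay property is detected by the vanishing of local cohomology below the Krull dimension. Write $R = S/I$, where $I = I_\Gamma$ is the Stanley--Reisner ideal of $\Gamma$, let $\mathfrak{m}$ be the irrelevant maximal ideal, and let $d$ be the maximal cardinality of a face of $\Gamma$, so that $\dim R = d$. The first step is the standard homological reduction: Grothendieck vanishing gives $H^i_{\mathfrak m}(R) = 0$ for $i > d$, while the depth-sensitivity of local cohomology gives $\mathrm{depth}\,R = \min\{i \st H^i_{\mathfrak m}(R) \neq 0\}$. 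As $R$ is Cohen--Macaulay exactly when $\mathrm{depth}\,R = d$, this reduces the theorem to the assertion that $R$ is Cohen--Macaulay if and only if $H^i_{\mathfrak m}(R) = 0$ for all $i < d$.

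The heart of the argument is then to compute $H^i_{\mathfrak m}(R)$ as a $\mathbb{Z}^n$-graded module and read off its vanishing in terms of links. The key tool is Hochster's formula, which I would establish by running the $\mathbb{Z}^n$-graded \v Cech complex $\check{C}^{\bullet}$ of $R$ with respect to the variables $v \in V$ and identifying each multigraded strand with a reduced simplicial cochain complex of a link. The outcome is that $H^i_{\mathfrak m}(R)_{\mathbf a}$ vanishes unless $\mathbf a \le \mathbf 0$ and its support $F := \{v \st a_v \neq 0\}$ is a face of $\Gamma$, in which case, independently of the precise negative values of $\mathbf a$,
\[
H^i_{\mathfrak m}(R)_{\mathbf a} \cong \tilde{H}^{\,i-|F|-1}\bigl(\lk_\Gamma(F);K\bigr).
\]
Over the field $K$, reduced simplicial cohomology is dual to reduced homology, so the nonvanishing of $H^i_{\mathfrak m}(R)$ is controlled precisely by the groups $\tilde{H}_j(\lk_\Gamma(F);K)$ with $j = i-|F|-1$, as $F$ ranges over the faces of $\Gamma$.

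Combining the two steps, $R$ is Cohen--Macaulay if and only if $\tilde{H}_{i-|F|-1}(\lk_\Gamma(F);K)=0$ for every face $F$ and every $i < d$; reindexing by $j = i-|F|-1$ turns this into the vanishing of $\tilde{H}_j(\lk_\Gamma(F);K)$ for all $j < d-|F|-1$. It then remains to match $d-|F|-1$ with $\dim\lk_\Gamma(F)$, which holds exactly when $F$ lies in a facet of top dimension $d-1$. I would close this gap by showing that each side of the equivalence independently forces $\Gamma$ to be pure: a Cohen--Macaulay ring is unmixed, while on the combinatorial side the homological condition passes to every iterated link (since $\lk_{\lk_\Gamma(\sigma)}(\tau) = \lk_\Gamma(\sigma\cup\tau)$), so choosing $\sigma$ maximal with $\lk_\Gamma(\sigma)$ non-pure and invoking connectivity (the condition forces $\tilde{H}_0(\lk_\Gamma(\sigma);K)=0$ once $\dim\lk_\Gamma(\sigma)\ge 1$) makes all vertex-links, and hence the whole link, equidimensional, a contradiction. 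Once purity is in hand, $\dim\lk_\Gamma(F) = d-|F|-1$ for every face $F$, and the reindexed condition becomes exactly the displayed criterion, where the range $i \neq \dim\lk_\Gamma(\sigma)$ is equivalent to $i < \dim\lk_\Gamma(\sigma)$ because reduced homology vanishes above the dimension automatically.

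The main obstacle is Hochster's formula itself: the genuinely substantial work is the multigraded computation of $H^i_{\mathfrak m}(R)$, namely verifying that each graded strand of the \v Cech complex collapses onto the reduced cochain complex of the appropriate link, and that the strands with $\mathbf a \not\le \mathbf 0$ or with non-face support are acyclic. A secondary, easily overlooked subtlety is the purity bookkeeping in the final step, which is precisely what legitimizes the clean index-free phrasing ``$i \neq \dim\lk_\Gamma(\sigma)$''. I note that there are alternative routes to the same conclusion --- for instance via the canonical module and Alexander duality, or via Munkres' topological reformulation in terms of local homology of the geometric realization --- but the local cohomology argument above keeps the field-dependence transparent and is the most direct to organize.
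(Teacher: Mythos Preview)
The paper does not supply a proof of \cref{t:Reisner}: it is stated with a citation to \cite{Rei} and used as a black box throughout. Your proposal, by contrast, outlines a complete argument via Hochster's formula for the $\mathbb{Z}^n$-graded local cohomology of the Stanley--Reisner ring, which is one of the standard modern routes to the result. The sketch is correct in its essentials: the reduction of Cohen--Macaulayness to vanishing of $H^i_{\mathfrak m}(R)$ for $i<d$, the identification of each multigraded strand of the \v Cech complex with the reduced cochain complex of the appropriate link, and the final reindexing are all sound. Your handling of the purity subtlety on the combinatorial side---choosing $\sigma$ maximal with non-pure link, using the hypothesis at $\sigma$ to get connectivity, and then propagating facet sizes along edges via the purity of vertex links---is exactly the right idea and goes through. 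So there is no gap; you have simply provided a proof where the paper elected to quote one.
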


The following statement is an immediate consequence of Reisner's criterion.

\begin{corollary}\label{c:Reisner} Every Cohen--Macaulay complex is connected and pure, and  every link of a Cohen--Macaulay complex is Cohen--Macaulay.
\end{corollary}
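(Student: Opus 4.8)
The plan is to treat the three assertions separately, each as a reading of Reisner's criterion (\cref{t:Reisner}), and to deduce purity from the other two, since it is the only part that is not read off immediately. For \emph{connectedness}, apply \cref{t:Reisner} to the empty face: because $\lk_\Gamma(\emptyset) = \Gamma$, the criterion yields $\tilde{H}_i(\Gamma;K) = 0$ for all $i \neq \dim\Gamma$, and in particular $\tilde{H}_0(\Gamma;K) = 0$ once $\dim\Gamma \geq 1$. As $\dim_K \tilde{H}_0(\Gamma;K)$ is one less than the number of connected components, $\Gamma$ is then connected. For the \emph{link} assertion, the crux is the elementary identity
\[
\lk_{\lk_\Gamma(\sigma)}(\tau) = \lk_\Gamma(\sigma \cup \tau), \qquad \tau \in \lk_\Gamma(\sigma),
\]
obtained by unwinding the definition: both sides are the faces $\rho$ disjoint from $\sigma\cup\tau$ with $\rho\cup\sigma\cup\tau \in \Gamma$. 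Writing $\Gamma' = \lk_\Gamma(\sigma)$, every face of $\Gamma'$ has the form $\tau$ with $\sigma\cup\tau \in \Gamma$, so $\tilde{H}_i(\lk_{\Gamma'}(\tau);K) = \tilde{H}_i(\lk_\Gamma(\sigma\cup\tau);K)$ vanishes for $i \neq \dim\lk_{\Gamma'}(\tau)$ by \cref{t:Reisner} applied to $\Gamma$; reading \cref{t:Reisner} in the reverse direction shows $\Gamma'$ is Cohen--Macaulay.

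For \emph{purity} I would argue by induction on $\dim\Gamma$, feeding in the two assertions just established. When $\dim\Gamma = 0$ every facet is a single vertex, so there is nothing to prove. For the inductive step fix a vertex $v$; by the link assertion $\lk_\Gamma(v)$ is Cohen--Macaulay, and as $\dim\lk_\Gamma(v) < \dim\Gamma$ it is pure by induction, with all facets of some common cardinality $d_v$. Since a set $F \ni v$ is a facet of $\Gamma$ exactly when $F \setminus \{v\}$ is a facet of $\lk_\Gamma(v)$, every facet of $\Gamma$ containing $v$ has cardinality $d_v + 1$. To see that $d_v$ is independent of $v$, note that if $\{v,w\} \in \Gamma$ then extending it to a facet $F$ gives $d_v + 1 = |F| = d_w + 1$; since $\dim\Gamma \geq 1$ forces $\Gamma$ (hence its $1$-skeleton) to be connected, propagating this equality along edges makes $d_v$ constant. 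Thus all facets of $\Gamma$ share the cardinality $d_v + 1$, i.e.\ $\Gamma$ is pure.

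The main obstacle is precisely this purity statement: unlike connectedness and the stability of the Cohen--Macaulay property under links, it is not a single instance of Reisner's criterion but requires combining both of those facts with the dimension induction. The delicate point is the transition from ``$d_v$ is constant along each edge'' to ``$d_v$ is globally constant,'' which is where connectedness of $\Gamma$ enters; everything else is routine bookkeeping with links.
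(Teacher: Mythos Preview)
The paper does not actually supply a proof of this corollary: it is introduced with the sentence ``The following statement is an immediate consequence of Reisner's criterion'' and left at that. Your proposal is correct and simply spells out the details the paper suppresses; the derivations of connectedness and of the stability of links under Reisner's criterion are the standard ones, and your inductive argument for purity (pure links at vertices plus connectedness of the $1$-skeleton to make the local facet size $d_v+1$ global) is a clean way to extract purity when one is working purely from \cref{t:Reisner}. One small remark: your connectedness argument, as you note, only applies when $\dim\Gamma \geq 1$; the corollary as stated in the paper is silent on this, but a $0$-dimensional complex is always Cohen--Macaulay yet need not be connected, so the caveat is genuinely needed and is an imprecision in the paper's statement rather than in your proof.
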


A simple way of extending Reisner's criterion to all monomial ideals from the class of squarefree ones is a technique called \say{polarization}, which transforms a monomial ideal into a squarefree one via a regular sequence. In particular, a monomial ideal is Cohen--Macaulay if and only if its polarization is Cohen--Macaulay (see \cite[Corollary 1.6.3]{HHgtm260}).

\begin{definition}[{\bf Polarization}]\label{d:polarization}
 Let $S=K[x_1,\ldots,x_n]$ be a polynomial ring, let $i \in [n]$ and let $a_i$ be a non negative integer. We define the {\bf polarization} $\MP(x_i^{a_i})$ of $x_i^{a_i}$ to be $1$ if $a_i=0$, and otherwise the squarefree monomial in an extended polynomial ring
 $$\MP(x_i^{a_i})=x_{i,1}x_{i,2}\cdots x_{i,a_i} \in 
 K[x_1,\ldots,\hat{x_i},\ldots,x_n,x_{i,1},\ldots,x_{i,a_i}].$$
 We define the polarization of a monomial $m=x_1^{a_1}\cdots x_n^{a_n}$ in the polynomial ring $S$ to be the squarefree monomial 
$\MP(m)=\MP(x_1^{a_1})\cdots \MP(x_n^{a_n})$, and the  polarization of an $I$ ideal minimally generated by monomials $m_1,\ldots,m_q$ of $S$ to be 
the squarefree monomial ideal $$\MP(I)= \big ( \MP(m_1),\ldots,\MP(m_q) \big )$$
in a polynomial ring extension of $S$.
\end{definition}

%%%%%%%%%%%%%%%%%%%%%%%%%%%%%%%%%%%%%%%%%
\section{Squares of edge ideals and Stanley--Reisner complexes}\label{sec:2}
%%%%%%%%%%%%%%%%%%%%%%%%%%%%%%%%%%%%%%%%%
Our goal in this paper is to investigate which graphs $G$ with edge ideal $I(G)$ have the property that $I(G)^2$ is Cohen--Macaulay. Our main tool is the application 
of Reisner's criterion to the Stanley--Reisner complex  $\Gamma^2_{G}$ of the polarization of $I(G)^2$.   Following our notation for polarization,  when $G$ is a  graph on vertex set $V$, then the vertex set of $\Gamma^2_G$ is 
$$
V(\Gamma^2_G)=V_{(1)} \cup V_{(2)} \qwhere 
V_{(1)}=\{v_1 \st v \in V\} \qand
V_{(2)}=\{v_2 \st v \in V\}.
$$

We begin with an example.

\begin{example}[{\bf $\Gamma^2_{P_3}$ is not Cohen--Macaulay}]\label{e:P3}  
Let $P_3$ be the path of length $3$ with the vertices $x,y,z,w$ and with  edges $\{z,x\}, \{x,y\}, \{y,w\}$.  

%\begin{center}
$$\begin{tikzpicture}[scale=.8]
\tikzstyle{point}=[inner sep=0pt]
\node[label=left:{\small$z$}] (z) at (0,0) {};
\node[label=below:{\small$x$}] (x) at (1,0) {};
\node[label=below:{\small$y$}] (y) at (2,0) {};
\node[label=right:{\small$w$}] (w) at (3,0) {};
\draw (z.center) -- (x.center);
\draw (x.center) -- (y.center);
\draw (y.center) -- (w.center);
% \node [point,label=left:{\small $x_1$}] at (0,1) {};
% \node [point,label=left:{\small $x_2$}] at (-1,0) {};
% \node [point,label=right:{\small $x_3$}] at (1,0) {};
\draw[black, fill=black] (x) circle(.05);
\draw[black, fill=black] (y) circle(.05);
\draw[black, fill=black] (z) circle(.05);
\draw[black, fill=black] (w) circle(.05);
\end{tikzpicture}
$$
%\end{center}
The edge ideal of $P_3$ above is $I(P_3)=(xz,xy,yw)$, and hence the polarization of the square $I(P_3)^2$ is the following squarefree monomial ideal
$$
\MP(I(P_3)^2)= 
(x_1x_2z_1z_2,
x_1x_2z_1y_1,
x_1z_1y_1w_1,
x_1x_2y_1y_2,
x_1y_1y_2w_1,
y_1y_2w_1w_2).
$$
The facets of the Stanley--Reisner complex $\Gamma^2_{P_3}$ of  $\MP(I(P_3)^2)$ are  
\begin{eqnarray*}
\{x_1,w_1,x_2,y_2,z_2,w_2\},
\{x_1,y_1,z_1,y_2,z_2,w_2\},
\{x_1,y_1,w_1,x_2,z_2,w_2\},\\
\{y_1,z_1,x_2,y_2,z_2,w_2\},
\{x_1,z_1,w_1,y_2,z_2,w_2\},
\{y_1,w_1,z_1,x_2,z_2,w_2\},\\
\{z_1,w_1,x_2,y_2,z_2,w_2\},
\{x_1,z_1,w_1,x_2,y_2,w_2\},
\{y_1,w_1,z_1,x_2,y_2,z_2\}.
\end{eqnarray*}

Let $\sigma = \{x_1,y_1,z_2,w_2\} \in \Gamma_{P_3}^2$.  The facets of $\lk_{\Gamma^2_{P_3}}(\sigma)$ are $$\{z_1, y_2\}, \, \, \, \{w_1, x_2\}.$$ Since $\lk_{\Gamma^2_{P_3}}(\sigma)$ is disconnected, Reisner's criterion implies that it is not acyclic, and hence $\lk_{\Gamma^2_{P_3}}(\sigma)$ cannot be Cohen--Macaulay.  It then follows from \cref{c:Reisner} that $\Gamma^2_{P_3}$ itself cannot Cohen--Macaulay.
\end{example}

We now give a concrete description of the facets of $\Gamma^2_G$ for any graph $G$ in terms of the shape of $G$ itself. We encourage the reader to compare the facets listed below with those computed in  \cref{e:P3}.

\begin{theorem}[{\bf The Stanley--Reisner complex of the polarization of $I(G)^2$}]\label{t:facets} Let $G$ be a finite graph on vertex set $V$  and let  $\Gamma^2_G$ be the Stanley--Reisner complex of the polarization of $I(G)^2$. Then the facets of $\Gamma^2_G$ are the maximal subsets of $V_{(1)} \cup V_{(2)}$ of the form
$$W_{(1)} \cup A_{(1)} \cup Z_{(2)}$$ where $W$ is a maximal subset of $V$ in one of the following forms:
\begin{itemize}

\item {\bf independent type:}
$W=\emptyset$,
 $$A \in \A(G)  \qand  
 Z=V;$$

\item {\bf leaf type:} 
$W=\{a,b\}$ where $G_W$ is a leaf edge of $G$ with free vertex $a$, 
$$A \in \A(G\setminus N_G[b]), \qand 
Z=V \setminus\{a\};$$

\item {\bf star type:}
$W=\{a^1,\ldots,a^t,b\}$ where the induced subgraph $G_W$ is a star centered at $b$,  
$$A \in \A \big (G \setminus N_G[W]), \qand  
Z=V \setminus\{b\};$$

\item {\bf triangle type:} 
 $W=\{a,b,c\}$ where $G_W$ is a triangle, 
$$A \in \A \big (G \setminus N_G[W] \big ), \qand  
Z=V \setminus\{a,b,c\}.$$
\end{itemize}

In particular, if $\{a,b\}$ is an edge of $G$, then $\{a, b\}_{(1)}$ is an edge of $\Gamma^2_G$. 
\end{theorem}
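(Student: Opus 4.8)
The plan is to prove \cref{t:facets} by directly applying the definition of the Stanley--Reisner complex to the polarization $\MP(I(G)^2)$, characterizing exactly which subsets of $V_{(1)} \cup V_{(2)}$ are \emph{non}-faces (equivalently, which squarefree monomials lie in $\MP(I(G)^2)$), and then identifying the maximal faces. The first step is to unwind the polarization explicitly: for each edge $\{u,v\} \in E(G)$ the generator $(uv)^2 = u^2v^2$ of $I(G)^2$ polarizes to $u_1u_2v_1v_2$, while a product of two distinct edges $\{u,v\}\{p,q\}$ yields a generator of $I(G)^2$ whose polarization depends on how many vertices the two edges share. Writing out $\MP$ of each of the three types of degree-$4$ monomials—two disjoint edges $uvpq \mapsto u_1v_1p_1q_1$, two edges sharing one vertex $u\cdot v^2\cdot q \mapsto u_1 v_1 v_2 q_1$ (this is where a second-index variable $v_2$ first intrudes), and the square of a single edge $u^2v^2 \mapsto u_1u_2v_1v_2$—gives the complete list of minimal generators of $\MP(I(G)^2)$. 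A subset $F \subseteq V_{(1)}\cup V_{(2)}$ is then a face precisely when $\bm_F$ is divisible by none of these, which translates into purely combinatorial conditions on the projections $F\cap V_{(1)}$ and $F\cap V_{(2)}$.

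\emph{Second}, I would translate the divisibility conditions into graph language. Write $F_1 = \{v : v_1 \in F\}$ and $F_2 = \{v : v_2 \in F\}$. The generators of the form $u_1v_1p_1q_1$ and $u_1v_1v_2q_1$ constrain $F_1$, and $u_1u_2v_1v_2$ constrains the interaction of $F_1$ and $F_2$. Analyzing these shows that the allowable ``first-layer'' sets $W_{(1)}\cup A_{(1)}$ decompose as a small ``core'' $W$ (which must be $\emptyset$, a leaf edge, a star, or a triangle—these being the induced subgraphs that can appear without forcing a forbidden degree-$4$ product) together with an independent set $A$ in the complementary part of the graph $G \setminus N_G[W]$; meanwhile the ``second-layer'' set $Z$ is forced to be the full complement dictated by $W$, as recorded in each of the four cases. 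The four shapes for $W$ arise exactly as the maximal configurations of edges whose pairwise products fail to generate elements of $I(G)^2$ in the second-index variables—this case split is the combinatorial heart of the statement.

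\emph{Third}, having described the faces, I would verify maximality: that each set of the listed form is indeed a facet, and conversely that every facet has one of these forms. This amounts to checking that enlarging $W$, $A$, or $Z$ in any of the four cases introduces a variable whose product with existing variables is divisible by a generator of $\MP(I(G)^2)$, while the maximality of $A$ as an independent set in $G\setminus N_G[W]$ and of $Z$ as the prescribed complement guarantees no further vertex can be added. The main obstacle I expect is the bookkeeping in the \textbf{star type} and \textbf{triangle type} cases: one must carefully track which second-index variables $v_2$ are permitted, since a single shared vertex among edges incident to the center $b$ (respectively among the three triangle vertices) changes which $u_1 v_1 v_2 q_1$-type generators are active, and it is precisely this that forces $Z = V\setminus\{b\}$ versus $Z = V\setminus\{a,b,c\}$. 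The final ``In particular'' claim, that $\{a,b\}_{(1)}$ is an edge of $\Gamma^2_G$ whenever $\{a,b\}\in E(G)$, is then immediate: $a_1b_1$ divides none of the listed generators (each generator of $\MP(I(G)^2)$ supported only on first-index variables has degree $4$), so $\{a_1,b_1\}$ is a face, and it sits inside a \textbf{leaf-} or \textbf{star-type} facet.
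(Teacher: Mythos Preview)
Your plan is correct and follows essentially the same route as the paper's own proof: enumerate the three shapes of minimal generators of $\MP(I(G)^2)$ (square of an edge, two edges sharing a vertex, two disjoint edges), translate divisibility by these into combinatorial constraints on the first- and second-layer projections of a face, and then run through the four cases for the ``core'' $W$ to verify that each listed set is a face and is maximal. The paper organizes the verification case by case just as you outline, with the same bookkeeping you anticipate in the star and triangle types governing which second-index vertices must be excluded from $Z$.
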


Before presenting a proof of \cref{t:facets}, we provide an example.

\begin{example}[{\bf The case of a triangle}]\label{ex:triangle} 
Below we demonstrate Macaulay2~\cite{M2} code for finding the facets of $\Gamma^2_G$ when $G$ is a triangle with vertices $a,b,c$. Observe that $G$ is unmixed, but  $\Gamma^2_G$ is not pure. It turns out that the triangle is the main obstruction to $\Gamma^2_G$  being pure if $G$ is an unmixed graph. This will be explored in \cref{t:pure} below.
\begin{verbatim}
I=monomialIdeal(a*b,b*c,c*a);
J=I^2;
H= polarize J;
D=simplicialComplex H
 simplicialComplex
| a_1b_1c_1      -- (triangle)
  a_2b_2c_1c_2   -- (independent set c)
  a_1b_2c_1c_2   -- (star ac centered at a)
  a_2b_1c_1c_2   -- (star bc centered at b)
  a_2b_1b_2c_2   -- (independent set b)
  a_1b_1b_2c_2   -- (star ab centered at a)
  a_1a_2b_2c_2   -- (independent set a)
  a_1a_2b_1c_2   -- (star ab centered at b)
  a_2b_1b_2c_1   -- (star bc centered at c)
  a_1a_2b_2c_1 | -- (star ac centered at c)
\end{verbatim}
\end{example}

\begin{proof}[Proof of \cref{t:facets}] 

The ideal $J
=\MP(I(G)^2)$ has generators of the form, for distinct vertices $a,b,c,d$: 
\begin{equation}
\label{e:possible}
\begin{tabular}{ll}
$a_1a_2 b_1b_2$ \qwhere $\{a,b\}$ &is an edge of $G$\\
$a_1b_1b_2c_1$ \qwhere $\{a,b\}, \{b,c\}$ &are edges of $G$ (a star)\\
$a_1b_1c_1d_1$ \qwhere $\{a,b\},\{c,d\}$ &are disjoint edges of $G$ (a matching)
\end{tabular}
\end{equation}

So every face of $\Gamma^2_G$ must avoid these three types of relations. In particular, the facets are the maximal such sets. Therefore, for subsets $V'$ and $V''$ of $V$,  $\sigma=V'_{(1)} \cup V''_{(2)}$ is a face of $\Gamma^2_G$ if and only if for distinct vertices $a,b,c,d \in V'$,

$$\begin{tabular}{ll}
$\{a,b\} \in E(G) \implies a \notin V'' \qor b \notin V''$ &(otherwise $\{a_1,a_2,b_1,b_2\} \in \Gamma^2_G$)\\
$\{a,b\}, \{b,c\} \in E(G)\implies b \notin V''$  &(otherwise $\{a_1,b_1,b_2, c_1\} \in \Gamma^2_G$)\\
$\{a,b\}, \{c,d\} \notin E(G)$ &(otherwise $\{a_1,b_1,c_1,d_1\} \in \Gamma^2_G$).
\end{tabular}
$$

The most basic type of facet, then, comes from maximal independent sets of vertices where there is no edge between them. In this case, all of $V_{(2)}$ can be part of the facet. 
These are the facets of \say{independent type}.

Next we consider facets of \say{leaf type}, that is,  a facet which contains $a_1,b_1$ and $a$ is a free vertex of $G$ connected to $b$. In this case, we consider the set 
$$\sigma=\{a,b\}_{(1)} \cup A_{(1)} \cup (V\setminus\{a\})_{(2)} \qwhere A \in \A (G \setminus N_G[b]).$$

First we claim that $\sigma$ is a face of $\Gamma^2_G$. This is because the only edge of $G$ with vertices in $\{a,b\} \cup A$ is the edge $\{a,b\}$, and so  for $\bm_\sigma$ to be divisible by a generator of $J$, $\sigma$ needs to contain $a_2$. We next consider the maximality of $\sigma$.  If we add $c_1$ to $\sigma$ for some vertex  $c$ of $G$ with $c_1 \notin \sigma$, then  either $c \in N_G[b]$ or $c$ is a vertex of $G \setminus N_G[b]$. This means that either  $\{b, c\}$ is an edge of $G$ or  $\{c, d\}$ is an edge of $G$ for some $d \in A$ (which necessarily implies that $d \notin N_G[b]$). In the former scenario the generator $a_1b_1b_2c_1$  and in the latter scenario the generator  $a_1b_1c_1d_1$ of $J$ will divide $\bm_\sigma$. If we add $a_2$ then $a_1a_2b_1b_2 \mid \bm_\sigma$. So $\sigma$ is a facet of $\Gamma^2_G$.

Next, we consider a star centered at $b$, with edges $\{a^1,b\}, \ldots,\{a^t,b\}$, and vertex set $W=\{a^1,\ldots,a^t,b\}$ so that  
$$\sigma=\{a^1,\ldots,a^t,b\}_{(1)} \cup A_{(1)} \cup (V\setminus\{b\})_{(2)} \qwhere A \in \A(G \setminus  N_G[W]).$$
To see that $\sigma$ is a face, we observe that since $\sigma$ does not contain disjoint edges, and the only  edges with vertices  in  $W \cup A$ are $\{a^1,b\}, \ldots, \{a^t,b\}$, for $\bm_\sigma$ to be divisible by a generator in \eqref{e:possible} it needs to contain the vertex  $b_2$, but  $b_2 \notin \sigma$.
Moreover $\sigma$ is maximal because adding $b_2$ forces  $\bm_\sigma \in J$ as discussed earlier, and adding a new vertex $c_1$ will create 
a new edge of either of the following forms:
\begin{itemize}
    \item $\{a^i,c\}$ for some $i$ which creates a \say{star} generator of $J$
centered at $a^i$ in \eqref{e:possible} forcing $\bm_\sigma \in J$, or
\item $\{b,c\}$, in which case since we are assuming that $W$ is maximal with the property that $G_W$ is a  star, we must have an edge $\{a^i,c\}$ is an edge of $G$ for some $i \in [t]$,  which was covered in the previous case, or 
\item $\{c,d\}$ which creates a \say{matching} generator of $J$ in \cref{e:possible}, again forcing $\bm_\sigma \in J$. 
     \end{itemize}
Therefore $\sigma$ is a facet of \say{star type}.

Finally, we consider a triangle $\{a,b\},\{a,c\},\{b,c\}$ in $G$ with $W=\{a,b,c\}$, and we claim
$$\sigma=\{a_1,b_1,c_1\} \cup A_{(1)} \cup (V\setminus\{a,b,c\})_{(2)} \qwhere A \in \A(G \setminus  N_G[W]).$$
According to \eqref{e:possible}, $\sigma \in \Gamma^2_G$ if and only if  $a_2, b_2, c_2 \notin \sigma$, so $\sigma \in \Gamma^2_G$. For the same reason $\sigma$ is maximal, as if we add any $d_1$ to $\sigma$, we will have a new edge $\{d_1,a_1\}$ or$\{d_1,b_1\}$, $\{d_1,c_1\}$ or $\{d_1,e_1\}$ for some $e \in A$, in which case we will have either $a_1b_1c_1d_1 \mid \bm_\sigma$, or $a_1b_1d_1e_1 \mid \bm_\sigma$ forcing $\sigma \notin \Gamma^2_G$. Hence  $\sigma$ is a facet of $\Gamma^2_G$ of \say{triangle type}. 
\end {proof}

\begin{lemma}\label{l:leaf} Let $G$ be a finite graph with a vertex $b$, and suppose $A$ is a (maximal) independent set of 
      $G\setminus N_G[b]$.  Then $A\cup\{b\}$ is a (maximal) independent set of $G$.
\end{lemma}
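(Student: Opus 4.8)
The plan is to treat the two assertions packaged in the parenthetical statement separately: first the basic claim that $A \cup \{b\}$ is independent in $G$, and then, building on it, the \emph{maximal} version. Throughout, the governing observation is that the vertices of $G \setminus N_G[b]$ are exactly the vertices of $G$ that are neither equal nor adjacent to $b$, since by definition $N_G[b] = N_G(b) \cup \{b\}$.

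First I would verify independence. Every vertex of $A$ lies in $V(G) \setminus N_G[b]$, and $G \setminus N_G[b]$ is an \emph{induced} subgraph, so it inherits all edges of $G$ among its vertices; since $A$ is independent there, no two distinct elements of $A$ are adjacent in $G$. It then remains only to check that $b$ is adjacent to no vertex of $A$. But each vertex of $A$ avoids $N_G[b]$, hence in particular is not a neighbour of $b$, and $b \notin A$ because $b \in N_G[b]$. Therefore $A \cup \{b\}$ spans no edge of $G$ and is an independent set.

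Next, assuming $A$ is \emph{maximal} in $G \setminus N_G[b]$, I would prove $A \cup \{b\}$ is maximal in $G$ by contradiction. Suppose some vertex $v \in V(G)$ with $v \notin A \cup \{b\}$ could be adjoined so that $(A \cup \{b\}) \cup \{v\}$ is still independent in $G$. The decisive step is to relocate $v$ inside the deletion: since the enlarged set is independent, $v$ is not adjacent to $b$ and $v \neq b$, so $v \notin N_G[b]$, meaning $v$ is a vertex of $G \setminus N_G[b]$. As $v$ is moreover non-adjacent to every vertex of $A$, the set $A \cup \{v\}$ would be an independent set of $G \setminus N_G[b]$ strictly larger than $A$, contradicting the maximality of $A$ there. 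Hence no such $v$ exists, and $A \cup \{b\}$ is maximal.

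The argument is essentially a bookkeeping verification, so I do not expect any serious obstacle; the one point deserving care is the maximality step, where one must notice that any candidate vertex extending $A \cup \{b\}$ is \emph{forced} to be a non-neighbour of $b$ and therefore automatically belongs to the deleted subgraph $G \setminus N_G[b]$. This is precisely the observation that transfers the obstruction back to the maximality of $A$ in $G \setminus N_G[b]$, and it closes the proof.
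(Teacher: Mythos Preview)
Your proof is correct and follows essentially the same approach as the paper: both establish independence by noting that $b$ has no neighbours in $A$ by construction, and handle maximality by observing that any candidate vertex $v\notin A\cup\{b\}$ either lies in $N_G[b]$ (and is then adjacent to $b$) or lies in $G\setminus N_G[b]$ (and is then adjacent to some vertex of $A$ by maximality of $A$). The only cosmetic difference is that you phrase the maximality step as a proof by contradiction, while the paper does a direct case split, but the logic is identical.
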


\begin{proof} Suppose $A$ is a (maximal) independent set of 
      $G\setminus N_G[b]$. Then $b$ is connected to no vertex in $A$ by construction, and hence $A \cup \{b\}$ is an independent set in $G$. Now suppose $A$ was picked to be maximal. Then we claim $A\cup\{b\}$ is also maximal: the maximality of $A$ implies that any other vertex of $G\setminus N_G[b]$ is connected to some vertex in $A$ by an edge. Moreover, if we pick a vertex $v$ in $N_G[b]$, then $v$ is either $b$ itself or is connected to $b$ via an edge. Since $A\cup\{b\}$ cannot be enlarged without losing its independence, it must be maximal.
        \end{proof}

\begin{theorem}[{\bf Purity of $\Gamma^2_G$}]\label{t:pure}
Let $G$ be a finite graph on $n \geq 1$ vertices. Then $$\dim(\Gamma^2_G) \geq n.$$ Moreover,  $\Gamma^2_G$ is pure (of dimension $n +\alpha(G) -1$) if and only if $G$ is unmixed and contains no triangle. 
\end{theorem}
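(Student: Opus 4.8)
The plan is to use the explicit facet description from \cref{t:facets} to compute the cardinality of each facet type and then compare. First I would establish the dimension bound. Every facet has the form $W_{(1)} \cup A_{(1)} \cup Z_{(2)}$, and in each of the four types the three pieces $W_{(1)}$, $A_{(1)}$, $Z_{(2)}$ are pairwise disjoint, so the cardinality of a facet is $|W| + |A| + |Z|$. Using \cref{l:leaf} (and its obvious star/triangle analogues obtained by replacing $\{b\}$ with $W$), the set $W \cup A$ is always a maximal independent set of $G$, except in the triangle case where $W$ is not independent. This already controls the sizes: for independent type $|W|+|A| = \alpha$-type quantities and $|Z|=n$; for leaf and star types $|W \cup A|$ is a maximal independent set and $|Z| = n - 1$; for triangle type $W \cup A$ contains the triangle so $|W|+|A| = 1 + |A \cup \{a,b,c\}\text{-adjusted}|$ and $|Z| = n-3$. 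To get the lower bound $\dim(\Gamma^2_G) \geq n$, I would exhibit one facet of cardinality $\geq n+1$: taking any maximal independent set $A \in \A(G)$ gives an independent-type facet of cardinality $|A| + n \geq 1 + n$, since $\alpha(G) \geq 1$ (no isolated vertices, so any single vertex is independent). Hence $\dim(\Gamma^2_G) = |A| + n - 1 \geq n$.

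Next I would compute each facet size precisely in terms of $\alpha := \alpha(G)$ under the hypothesis that $G$ is unmixed and triangle-free, and show they all equal $n + \alpha - 1 + 1 = n + \alpha$ in cardinality (dimension $n + \alpha - 1$). For the independent type: $|W|+|A|+|Z| = 0 + \alpha + n$. For the leaf type: by \cref{l:leaf}, $\{a,b\}\cup A = \{b\} \cup A$ is a maximal independent set of $G$ once we note $a$ is free and the facet uses $\{a,b\}_{(1)}$ with $A \in \A(G \setminus N_G[b])$; by unmixedness its size is $\alpha$, but the facet uses $|W| = 2$ vertices in the first copy plus $|A|$, and $|A| = \alpha - 1$ (since $\{b\} \cup A$ has size $\alpha$), giving $2 + (\alpha-1) + (n-1) = n + \alpha$. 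For the star type with $W = \{a^1,\dots,a^t,b\}$: the analogue of \cref{l:leaf} gives $\{b\} \cup A$ maximal independent of size $\alpha$, so $|A| = \alpha - 1$, and $|W| = t+1$, but crucially the $a^i$ are not all independent from each other unless $t=1$; however each $a^i$ together with $A$ and $b$ is constrained, and the correct count is $|W| + |A| + |Z| = (t+1) + (\alpha - 1) + (n-1)$. The main subtlety is that this equals $n + \alpha$ only when $t = 1$, so I expect the star type to force $t=1$ or else to produce facets of differing dimension. I would therefore carefully recompute using the fact that in a triangle-free graph a star's leaves $a^1,\dots,a^t$ form an independent set, so $\{a^1,\dots,a^t\} \cup A$ is independent of size $t + |A|$, and maximality forces $t + |A| = \alpha$ (an appropriate maximal independent set), yielding $|W| + |A| + |Z| = (t+1) + (\alpha - t) + (n-1) = n + \alpha$; this is the reconciliation that makes all non-triangle facets have the same size.

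The heart of the argument, and the main obstacle, is the equivalence itself rather than the counting. For the forward direction I would argue the contrapositive twice. If $G$ contains a triangle $\{a,b,c\}$, the triangle-type facet has cardinality $|W| + |A| + |Z| = 3 + |A| + (n-3) = n + |A|$ where $A \in \A(G \setminus N_G[W])$, whereas an independent-type facet has cardinality $n + \alpha$; since the triangle's closed neighborhood removes at least the three triangle vertices, $|A| \leq \alpha(G \setminus N_G[W]) < \alpha(G)$ in general, and I would show these two facet sizes genuinely differ, breaking purity. If $G$ is not unmixed, then $\A(G)$ contains maximal independent sets $A, A'$ of different sizes, producing two independent-type facets of cardinalities $n + |A| \neq n + |A'|$, so $\Gamma^2_G$ is not pure. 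For the converse I would collect the size computations from the previous paragraph: assuming $G$ is unmixed and triangle-free, every facet is of independent, leaf, or star type (no triangle type exists), and each has cardinality exactly $n + \alpha$, so $\Gamma^2_G$ is pure of dimension $n + \alpha - 1$. The delicate point throughout is justifying that the maximal-independent-set sizes appearing via \cref{l:leaf} and its star analogue are exactly $\alpha$ under unmixedness — i.e., that the maximality of $A$ in the deletion combines with unmixedness of $G$ to pin down $|W \cup A| = \alpha$ — and that the leaf/star/independent counts all collapse to the single value $n + \alpha$; verifying this uniformly across the three surviving types is where the real care is needed.
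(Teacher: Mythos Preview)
Your overall strategy matches the paper's proof: compute facet sizes from \cref{t:facets}, use the independent-type facets for both the dimension bound and the unmixedness consequence, and check whether the leaf/star/triangle-type sizes agree with $n + \alpha(G)$. The forward direction is essentially right; for the triangle case just make explicit that $\alpha(G) \geq \alpha(G \setminus N_G[W]) + 1$ always holds (append any one vertex of the triangle to an independent set of $G \setminus N_G[W]$), so the inequality is strict, not merely ``in general.''

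There is, however, a real gap in the converse direction for the star type. You correctly identify $\{a^1,\dots,a^t\} \cup A$ as the independent set whose size must equal $\alpha(G)$, but you attribute its maximality to ``the maximality of $A$ in the deletion combined with unmixedness of $G$.'' That is not enough. Maximality of $A$ in $G \setminus N_G[W]$ only tells you that any candidate vertex $v$ must lie in $N_G[W]$; if $v \in N_G(b) \setminus \{a^1,\dots,a^t\}$ and $v$ is not adjacent to any $a^i$ (which triangle-freeness permits), nothing you have written rules $v$ out. The missing ingredient is the maximality of $W$ itself, which is built into the facet description in \cref{t:facets}: if such a $v$ existed, then $W' = W \cup \{v\}$ would still induce a star centered at $b$ (here triangle-freeness is used again), and $\sigma_{W'} = W'_{(1)} \cup A_{(1)} \cup (V \setminus \{b\})_{(2)}$ would strictly contain your supposed facet $\sigma_W$, a contradiction. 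This is exactly how the paper closes the argument, and it is not an ``obvious analogue'' of \cref{l:leaf} --- the purely graph-theoretic version of that lemma for stars is false without the extra hypothesis that the star $W$ is maximal.
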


\begin{proof} By \cref{t:facets}, we know the facets of $G$ with vertex set $V$ are of the form 
$$\sigma=W_{(1)} \cup A_{(1)} \cup Z_{(2)}$$ with conditions on the sets $W$, $A$ and $Z$. In each case we calculate the size of the corresponding facet.
Assuming $|V|=n$, we must have $W$  a maximal subset of $V$ such that 

\begin{itemize}

\item (independent facets) $W=\emptyset$,  
      $A \in \A(G)$, and  
      $Z=V$, in which case 
      $$|\sigma|=n+|A| \leq n+\alpha(G);$$

\item (leaf facets) $W=\{a,b\}$ where $\{a,b\}$ is a leaf edge of $G$ with free vertex $a$, 
      $A \in \A(G\setminus N_G[b])$, and 
      $Z=V \setminus\{a\}$, then 
      $$|\sigma|=n+1+|A|  \leq n+1 +\alpha(G\setminus N_G[b]);$$

\item (star facets) $W=\{a^1,\ldots,a^t,b\}$ where $G_W$ is a star centered at $b$,  
      $A \in \A \big (G \setminus N_G[W] \big )$ and  
      $Z=V \setminus\{b\}$, in which case 
      $$|\sigma|=n+t+|A| \leq n+t+\alpha(G\setminus  N_G[W]);$$

\item (triangle facets) $W=\{a,b,c\}$ where $G_W$ is a triangle, 
      $A \in \A \big (G \setminus N_G[W] \big )$, and  
      $Z=V \setminus\{a,b,c\}$, then 
      $$|\sigma|=n+|A| \leq n+\alpha(G\setminus N_G[W]).$$

\end{itemize}

The simplicial complex $\Gamma^2_G$ always has an independent type facet $\sigma$ which corresponds to a  maximal independent set $A$ of $G$. Since $G$ has one or more vertices, and a set of one vertex  is an independent set, it follows that $$\dim(\Gamma^2_G) \geq \dim(\sigma) = |\sigma| -1 \geq n+1-1=n.$$ 

Now suppose $\Gamma^2_G$ is pure. Then all facets of independent type have the same cardinality equal to $n + \alpha(G)$ and in particular, all maximal independent  
sets of $G$ have cardinality equal to $\alpha(G)$, implying that $G$ is unmixed, and also $\dim(\Gamma^2_G)=n+\alpha(G)-1$.

If $G$ contains a triangle with vertices $a,b,c$, then it will have a facet of triangle type, which has size equal to  $n+|A|$  where $A$ is a maximal independent set of $G\setminus N_G[W]$. This implies that 
\begin{equation}\label{e:alphas}
\alpha(G)=\alpha \big (G\setminus N_G[W] \big ) \qwhere W=\{a,b,c\} \mbox{ is a triangle}.
\end{equation}
However observe that
if $A$ is an independent set of $G\setminus N_G[W]$, then $A \cup\{a\}$ is an independent set of $G$, which implies that 
$$\alpha(G) \geq \alpha \big (G\setminus N_G[W] \big ) +1,$$ contradicting \eqref{e:alphas}. Therefore, we have proved that if $\Gamma^2_G$ is pure, then $G$ is unmixed and triangle-free.

Conversely, assume that $G$ is unmixed and triangle-free. Then all facets of independent type will have size equal to $n+\alpha(G)$. By \cref{l:leaf}, all facets of leaf type will have size
$n+1+\alpha(G)-1=n+\alpha(G)$. Since $G$ is triangle free, there are no facets of triangle type in $\Gamma^2_G$, so we consider a facet of star type  
$$\sigma_W=W_{(1)} \cup A_{(1)} \cup (V\setminus\{b\})_{(2)}$$ where 
$W=\{a^1,\ldots,a^t,b\}$ is maximal, $G_W$ is a star centered at $b$, and $A$ is a maximal independent set of 
      $G'=G\setminus N_G[W]$. 

We claim that $A\cup\{a^1,\ldots,a^t\}$ is a maximal independent set of $G$. Since $a^1,\ldots,a^t$ are not connected to any vertex in $A$ by construction,  $A \cup \{a^1,\ldots,a^t\}$ is an independent set in $G$. The maximality of $A$ implies that any other vertex of $G'$ is connected to some vertex in $A$ by an edge. 
Suppose there is a vertex $v$ such that  
$$ v \in N_G[W] \qand A'=A \cup\{a^1,\ldots,a^t\} \cup\{v\} \in \A(G).$$ 
Since $v$ is independent of $a^1,\ldots, a^t$, we have moreover  $$v \notin N_G[a^1] \cup \cdots\cup N_G[a^t].$$
Therefore we must have 
\begin{itemize}
\item $v \in N_G(b)\setminus\{a^1,\ldots,a^t\}$, 
\item since $G$ is triangle-free, $G_{W'}$ is a star where $W'=\{v,a^1,\ldots,a^t,b\}$,
\item no vertex in $A$ is adjacent to $v$, and so all vertices in $A$ remain vertices of the subgraph $G''=G\setminus N_G[W']$, and $A$ is therefore still an independent set of $G''$.
\end{itemize}
We have hence  created a star type face
$$\sigma_{W'}=W'_{(1)} \cup A_{(1)} \cup (V\setminus\{b\})_{(2)} \in \Gamma^2_G \qand 
\sigma_W \subsetneq \sigma_{W'}
$$
contradicting $\sigma_W$ being a facet. Therefore $A\cup\{a^1,\ldots,a^t\}$ must 
be a maximal independent set of $G$, implying that $$\alpha(G')=\alpha(G)-t.$$ In particular all facets of star type also have size $n+\alpha(G)$. This ends our argument.
\end{proof}

%%%%%%%%%%%%%%%%%%%%%%%%%%%%%%%%%%%%%%%%%
 \section{Classifications}\label{sec:3}
%%%%%%%%%%%%%%%%%%%%%%%%%%%%%%%%%%%%%%%%%

Which finite graphs have Cohen--Macaulay squares?  In the present section, we classify finite graphs with Cohen--Macaulay squares in particular classes of finite graphs.  One of the immediate corollaries of \cref{t:pure} is the \cref{c:cycles}. We first record a well-known fact.

\begin{lemma}\label{l:unmixed}
The cycle  $C_t$ on $t$ vertices is unmixed if and only if $t \in \{3,4,5,7\}$. 
\end{lemma}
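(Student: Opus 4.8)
The plan is to reduce the computation of the maximal independent sets of $C_t$ to a purely combinatorial count of \emph{gaps} around the cycle. Label the vertices $0,1,\ldots,t-1$ cyclically, and observe that any independent set $S$, listed in cyclic order, partitions the cycle into consecutive gaps whose lengths are the cyclic distances between successive chosen vertices. I claim that $S$ is a \emph{maximal} independent set precisely when every such gap has length $2$ or $3$: a gap of length $1$ would mean two adjacent chosen vertices (violating independence), whereas a gap of length $\geq 4$ leaves a vertex in its interior adjacent to no element of $S$, so $S$ could be enlarged; conversely, gaps of length exactly $2$ or $3$ leave every omitted vertex adjacent to a chosen one, so no enlargement is possible. (The degenerate case $|S|=1$ occurs only for $t=3$, where the single gap has length $3$.)

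The key step is then to translate this into a Diophantine count. If a maximal independent set has $a$ gaps of length $2$ and $b$ gaps of length $3$, then
$$2a + 3b = t \qand |S| = a + b = \frac{t-b}{2}.$$
Thus the possible cardinalities of maximal independent sets of $C_t$ are exactly the values $(t-b)/2$ as $b$ ranges over the nonnegative integers satisfying $3b \leq t$ and $b \equiv t \pmod 2$ (the parity condition being equivalent to $a=(t-3b)/2$ being an integer, and conversely any such $b$ yields a genuine cyclic arrangement of gaps). Consequently, $C_t$ is unmixed if and only if there is a \emph{unique} admissible value of $b$.

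It remains to count the admissible $b$. Since $b$ must share the parity of $t$ and lie in $[0,t/3]$, a short case analysis finishes the argument: for $t\in\{3,5,7\}$ the only admissible (odd) value in range is $b=1$, since $b=3$ would require $t\geq 9$; for $t=4$ the only admissible (even) value is $b=0$; hence all four of these cycles are unmixed. For every even $t\geq 6$ both $b=0$ and $b=2$ are admissible, and for every odd $t\geq 9$ both $b=1$ and $b=3$ are admissible, so in each of these cases two maximal independent sets of distinct cardinalities exist and $C_t$ is not unmixed. Collecting the cases yields unmixedness exactly when $t\in\{3,4,5,7\}$.

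I expect the main obstacle to be stating and justifying the gap characterization cleanly, since it requires care with the cyclic (wrap-around) structure and with the degenerate small cases (the single-vertex set $|S|=1$, and the fact that $C_t$ is a simple graph only for $t\geq 3$); once that characterization is in place, the remaining parity-and-range bookkeeping is routine.
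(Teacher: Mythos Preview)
Your argument is correct and takes a genuinely different route from the paper. The paper's proof checks $t\le 7$ by hand and then, for $t\ge 8$, writes $t=2a+r=3b+s$ via the division algorithm and exhibits two explicit maximal independent sets---one obtained by taking every other vertex and one by taking every third vertex (possibly with one additional vertex)---and verifies by a short case split that their cardinalities differ. Your approach instead proves a structural characterization: maximal independent sets of $C_t$ are exactly those whose cyclic gap sequence uses only $2$'s and $3$'s, whence the possible cardinalities are $(t-b)/2$ for $b$ with $0\le 3b\le t$ and $b\equiv t\pmod 2$, and unmixedness reduces to uniqueness of such $b$. This is more conceptual and in fact yields more: it determines \emph{all} sizes of maximal independent sets of $C_t$, not just that two differ, and it handles the small cases uniformly rather than by inspection. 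The paper's argument, on the other hand, is shorter to write down once one is willing to check $t\le 7$ separately, since it only needs two concrete witnesses rather than a full classification. The one place in your write-up that deserves an extra sentence of justification is the converse direction---that every admissible $b$ is realized by an actual cyclic gap arrangement---but this is immediate (lay down the $a$ gaps of length $2$ followed by the $b$ gaps of length $3$ starting at vertex $0$), so the proof stands.
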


\begin{proof} The cases $t \leq 7$ can be checked by hand or using Macaulay~2. Assume $t \geq 8$. 
Using the division algorithm, we can write 
$$
t=2a +r=3b+s \qwhere 
a,b\geq 2, \quad  
r \leq 1 \qand
s \leq 2.
$$
Then $C_t$ always has a  maximal independent set of size $a$
\begin{equation}\label{eq:A}
\{x_2,x_4,\ldots,x_{2a}\}.
\end{equation}

Now consider two scenarios.
\begin{enumerate}
\item If $s=0$, then $C_t$ has a maximal independent set of size $b$
$$
\{x_3,x_6,\ldots,x_{3b}\}.
$$
If this set has the same cardinality as the one in \eqref{eq:A}, then $a=b$ and we have $t=2b+r=3b$, implying that $b=r\leq 1$, which contradicts the fact that $b\geq 2$. 

\item If $s\in\{1,2\}$, then $C_t$ has a maximal independent set of size $b+1$
$$
\{x_3,x_6, \ldots,x_{3b}\} \cup \{x_1\}.
$$
If this set has the same cardinality as the one in \eqref{eq:A}, then $a=b+1$ and we have $t=2(b+1)+r=3b+s$, implying that $b=r+2-s$. As $r-s\leq 0$ and $b \geq 2$, the only possibility is $r=s=1$ and $b=2$. Then
$t=3\cdot 2+1=7 <8$, a contradiction to our assumption that $t\geq 8$.
\end{enumerate}
We conclude that when $t\geq 8$, the cycle $C_t$ is not unmixed.
 \end{proof}

\begin{theorem}[{\bf When $\Gamma^2_{C_t}$ is Cohen--Macaulay}]\label{c:cycles}
If $C_t$ denotes a cycle on $t$ vertices, then $I(C_t)^2$ is Cohen--Macaulay if and only if $t=5$.  
\end{theorem}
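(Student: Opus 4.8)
The plan is to pass to the Stanley--Reisner complex and apply Reisner's criterion through the consequences recorded in \cref{c:Reisner}, \cref{t:pure} and \cref{t:facets}. Since a monomial ideal is Cohen--Macaulay exactly when its polarization is, and $\Gamma^2_{C_t}$ is by definition the Stanley--Reisner complex of $\MP(I(C_t)^2)$, the statement is equivalent to determining for which $t$ the complex $\Gamma^2_{C_t}$ is Cohen--Macaulay. My first step is to narrow the candidates using purity alone. By \cref{c:Reisner} a Cohen--Macaulay complex is pure, and by \cref{t:pure} the complex $\Gamma^2_{C_t}$ is pure if and only if $C_t$ is unmixed and triangle-free. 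The cycle $C_t$ is triangle-free precisely when $t\neq 3$, and by \cref{l:unmixed} it is unmixed precisely when $t\in\{3,4,5,7\}$. Hence $\Gamma^2_{C_t}$ can be Cohen--Macaulay only if $t\in\{4,5,7\}$; for every other $t$ it is not even pure.

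Next I would eliminate $t=4$ and $t=7$ by exhibiting, in each case, a single face whose link is disconnected, exactly as in \cref{e:P3}, and then invoking \cref{c:Reisner} (a link of a Cohen--Macaulay complex is Cohen--Macaulay, hence connected). The recipe, read off from \cref{t:facets}, is to place two adjacent cycle-vertices into the copy-$1$ layer of $\sigma$, which destroys every facet of independent type since no independent set contains an edge, leaving only star-type facets centered at those two vertices; one then adds just enough further vertices to isolate exactly two of them. For $C_4$ on $\{1,2,3,4\}$ the face $\sigma=\{1_1,2_1,3_2,4_2\}$ lies in exactly the two star facets centered at $1$ and at $2$, and its link has the two disjoint facets $\{4_1,2_2\}$ and $\{3_1,1_2\}$. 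For $C_7$ on $\{1,\dots,7\}$ the same idea, with one extra copy-$1$ vertex inserted to cut the number of relevant stars down to two, yields for instance $\sigma=\{1_1,2_1,5_1\}\cup\{3_2,4_2,5_2,6_2,7_2\}$, which lies in exactly the stars centered at $1$ and at $2$ (with $A=\{5\}$) and whose link has the two disjoint facets $\{7_1,2_2\}$ and $\{3_1,1_2\}$. In both cases the link is disconnected, so $\Gamma^2_{C_t}$ is not Cohen--Macaulay.

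It then remains to confirm that $\Gamma^2_{C_5}$ \emph{is} Cohen--Macaulay, which is the real content of the theorem. By \cref{t:facets} this is an explicit pure $6$-dimensional complex on the $10$ vertices $V_{(1)}\cup V_{(2)}$ with $10$ facets ($5$ of independent type and $5$ of star type). Here I would verify Reisner's criterion directly, checking that ${\tilde H}_i(\lk_{\Gamma^2_{C_5}}(\sigma);K)=0$ for every face $\sigma$ and every $i\neq\dim\lk_{\Gamma^2_{C_5}}(\sigma)$. The dihedral symmetry of $C_5$ acts on $\Gamma^2_{C_5}$ and sharply reduces the number of links one must inspect up to isomorphism; alternatively the required vanishing can be certified by producing a shelling of $\Gamma^2_{C_5}$, or simply confirmed with Macaulay2 as in \cref{ex:triangle}.

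I expect this last step to be the main obstacle. Steps one and two only require producing a single failure—one breach of purity, or one disconnected link—whereas establishing Cohen--Macaulayness of $\Gamma^2_{C_5}$ demands homological vanishing for \emph{all} of its links simultaneously. Organising that check, ideally by using the symmetry to reduce to a short list of link types and showing each is acyclic below its top dimension (for example by recognising it as a cone or as a shellable subcomplex), is where the genuine work lies.
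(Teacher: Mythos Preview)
Your proposal is correct and follows the same overall strategy as the paper: use \cref{t:pure} together with \cref{l:unmixed} to reduce to finitely many $t$, then dispatch the survivors case by case. The paper's own proof is in fact terser than yours: after narrowing to $t\in\{3,4,5,7\}$ it simply cites Macaulay2 computations for all four remaining values, whereas you eliminate $t=3$ already via the triangle-free clause of \cref{t:pure} and supply explicit disconnected links for $t=4$ and $t=7$ (both of which check out against \cref{t:facets}). For the positive case $t=5$ both you and the paper ultimately defer to a finite verification; the paper offers no shelling or symmetry argument and is content with the Macaulay2 certificate, so your acknowledgement that this is where the genuine work lies is accurate but not something the authors themselves carry out by hand.
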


\begin{proof}  By virtue of  \cref{t:pure,l:unmixed} $I(C_t)^2$ cannot be Cohen--Macaulay unless $t \in \{3,4,5,7\}$.   On the other hand, computations by Macaulay2 demonstrate that $I(C_t)^2$ is not Cohen--Macaulay for $t \in \{3,4,7\}$ and that $I(C_5)^2$ is Cohen--Macaulay.
\end{proof}

We are now in the position to give a powerful technique to show, in \cref{t:nice}, that plenty of finite graphs cannot have Cohen--Macaulay squares. \cref{l:AAAAA} provides a crucial ingredient.

\begin{lemma}\label{l:AAAAA}
Let $G$ be a bipartite graph on the vertex set 
$$V(G) = \{x_0, y_1, \ldots, y_s\} \cup 
\{y_0, x_1, \ldots, x_t\}$$ 
with $s \geq 1$ and $t \geq 1$ and suppose that 
$$N_G(x_0) = \{y_0, x_1, \ldots, x_t\}, \qquad 
N_G(y_0) = \{x_0, y_1, \ldots, y_s\}.$$  
$$\begin{tikzpicture}[scale=.8]
\tikzstyle{point}=[inner sep=0pt]
\node[label=left:{\small$x_1$}] (x1) at (0,1) {};
\node[label=left:{\small$\vdots$}] (x2) at (0,0) {};
\node[label=left:{\small$x_t$}] (xt) at (0,-1) {};
\node[label=below:{\small$x_0$}] (x) at (1,0) {};
\node[label=below:{\small$y_0$}] (y) at (2,0) {};
\node[label=right:{\small$\vdots$}] (y2) at (3,0) {};
\node[label=right:{\small$y_1$}] (y1) at (3,1) {};
\node[label=right:{\small$y_s$}] (ys) at (3,-1) {};

\foreach \i in {x1,xt,y} {\draw (x.center) -- (\i.center);}
\foreach \i in {y1,ys} {\draw (y.center) -- (\i.center);}
\foreach \i in {x,x1,xt,y,y1,ys} {\draw[black, fill=black] (\i) circle(.05);}
\end{tikzpicture}
$$
Then 
$\Gamma^2_{G}$ is not Cohen--Macaulay. 
\end{lemma}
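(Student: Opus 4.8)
The plan is to mirror \cref{e:P3}: I would exhibit a single face $\sigma$ of $\Gamma^2_G$ whose link is disconnected, and then invoke \cref{c:Reisner}. Indeed, a disconnected complex of positive dimension has $\tilde H_0 \neq 0$, so its link fails Reisner's criterion (\cref{t:Reisner}) and cannot be Cohen--Macaulay, whence $\Gamma^2_G$ cannot be either. Note that the path $P_3$ of \cref{e:P3} is exactly the case $s=t=1$ here (with $x_0,y_0$ the two interior vertices), so the face used there should generalize. Writing $X=\{x_1,\dots,x_t\}$ and $Y=\{y_1,\dots,y_s\}$, I would take
$$\sigma = \{(x_0)_1,(y_0)_1\} \cup X_{(2)} \cup Y_{(2)}.$$

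First I would check that $\sigma \in \Gamma^2_G$, using the three families of generators of $J=\MP(I(G)^2)$ recorded in \eqref{e:possible}. The only first-copy variables in $\sigma$ are $(x_0)_1$ and $(y_0)_1$, so $\bm_\sigma$ contains at most two first-copy variables; this rules out the star and matching generators outright, and the single edge $\{x_0,y_0\}$ among the first-copy vertices cannot produce an $a_1a_2b_1b_2$ generator because neither $(x_0)_2$ nor $(y_0)_2$ lies in $\sigma$. Next I would identify the vertices of $L := \lk_{\Gamma^2_G}(\sigma)$: these are exactly the variables outside $\sigma$, namely $(x_0)_2,(y_0)_2$ together with the first copies $X_{(1)}$ and $Y_{(1)}$, and a short pass through the same generator list shows each can be adjoined to $\sigma$ individually.

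The heart of the argument is to split the vertex set of $L$ as
$$A = X_{(1)} \cup \{(y_0)_2\}, \qquad B = Y_{(1)} \cup \{(x_0)_2\},$$
and to show no face of $L$ meets both $A$ and $B$; since connectivity of a simplicial complex is that of its $1$-skeleton, it suffices to exclude edges, so I would check the four types of crossing pairs. The key case is a pair $\{(x_i)_1,(y_j)_1\}$: adjoining it to $\sigma$ puts $x_0,y_0,x_i,y_j$ in the first copy, and the two edges $\{x_0,x_i\}$ and $\{y_0,y_j\}$ form a matching, so the matching generator $(x_0)_1(x_i)_1(y_0)_1(y_j)_1$ divides $\bm_\sigma$ and the pair is not a face. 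Crucially this blocking mechanism uses only the two guaranteed edges at $x_0$ and $y_0$, and is therefore insensitive to any extra edges between $X$ and $Y$. The remaining crossing pairs $\{(x_i)_1,(x_0)_2\}$, $\{(y_0)_2,(y_j)_1\}$ and $\{(y_0)_2,(x_0)_2\}$ are each killed by an $a_1a_2b_1b_2$ generator coming respectively from the edges $\{x_0,x_i\}$, $\{y_0,y_j\}$ and $\{x_0,y_0\}$. As no edge crosses, no higher face can either, so $L$ is the disjoint union of its restrictions to $A$ and to $B$, both nonempty since $s,t\geq 1$.

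Finally, because $L$ contains the edge $\{(x_1)_1,(y_0)_2\}$ (adjoining it to $\sigma$ creates no forbidden generator, as $\{x_1,y_0\}$ is a non-edge), we have $\dim L \geq 1$, so the disconnection gives $\tilde H_0(L;K)\neq 0$ with $0\neq \dim L$, contradicting Reisner's criterion; hence $L$ is not Cohen--Macaulay and, by \cref{c:Reisner}, neither is $\Gamma^2_G$. The main obstacle I anticipate is precisely the generality of the hypothesis: the lemma permits arbitrary edges between $X$ and $Y$, so the disconnection proof must never rely on their absence. The arrangement above sidesteps this because every crossing pair is blocked by an edge incident to $x_0$ or $y_0$ --- the only adjacencies the hypothesis actually pins down; verifying that this covers all crossing pairs, and that the potential ``bridge'' vertices $(x_0)_2,(y_0)_2$ stay each on a single side, is the part demanding the most careful bookkeeping.
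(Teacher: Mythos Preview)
Your proof is correct and follows essentially the same approach as the paper: your face $\sigma$ is exactly the paper's face $\pi = \{x_0,y_0\}_{(1)} \cup (V(G)\setminus\{x_0,y_0\})_{(2)}$, and both arguments show its link is disconnected into the two pieces $X_{(1)}\cup\{(y_0)_2\}$ and $Y_{(1)}\cup\{(x_0)_2\}$. The only difference is cosmetic: the paper invokes the facet classification of \cref{t:facets} to see that the sole facets containing $\pi$ are the two star-type facets centered at $x_0$ and $y_0$, whereas you verify the disconnection by hand against the generator list \eqref{e:possible}; your route is slightly longer but more self-contained.
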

\begin{proof}
Using \cref{t:facets}, let $$\sigma = (N_G[x_0])_{(1)} \cup (V(G) \setminus \{x_0\})_{(2)} \qand
\tau = (N_G[y_0])_{(1)} \cup (V(G) \setminus \{y_0\})_{(2)}$$ be facets of $\Gamma^2_{G}$.  
Let $\pi = \{x_0,y_0\}_{(1)} \cup (V(G) \setminus \{x_0,y_0\})_{(2)}$ be a face of $\Gamma^2_{G}$.  
Since the facets of $\Gamma^2_{G}$ containing $\{x_0,y_0\}_{(1)}$ are $\sigma$ and $\tau$, it follows that the facets of $\lk_{\Gamma^2_{G}}(\pi)$ are 
$$\{y_{1,1}, \ldots, y_{s,1}\} \cup \{x_{0,2}\} \qand  
\{x_{1,1}, \ldots, x_{t,1}\} \cup \{y_{0,2}\}.$$
Since $\lk_{\Gamma^2_{G}}(\pi)$ is disconnected, ${\tilde H}_i(\lk_{\Gamma^2_{G}}(\pi);K) \neq 0$, so $\Gamma^2_{G}$ fails Reisner's criterion and cannot be Cohen--Macaulay.  
\end{proof}

\begin{theorem}[{\bf A necessary condition for $I(G)^2$ to be Cohen--Macaulay}]
\label{t:nice}
If $G$ is a finite connected graph which contains as an induced subgraph a path of length $3$ with two vertices which are leaves of $G$, then $I(G)^2$ is not Cohen--Macaulay.
\end{theorem}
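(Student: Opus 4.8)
The plan is to apply Reisner's criterion through \cref{c:Reisner}: I will exhibit a single face of $\Gamma^2_G$ whose link is disconnected, hence has nonvanishing reduced $0$-th homology and cannot be Cohen--Macaulay; since every link of a Cohen--Macaulay complex is again Cohen--Macaulay, this forces $\Gamma^2_G$, and therefore $I(G)^2$, to fail to be Cohen--Macaulay. First I would fix notation for the induced path of length $3$, writing it as $a-b-c-d$. The two vertices of this path that are leaves of $G$ must be its endpoints $a$ and $d$, since the interior vertices $b,c$ have degree at least $2$; thus $N_G(a)=\{b\}$, $N_G(d)=\{c\}$, $\{b,c\}\in E(G)$, and the only edges of $G$ among $\{a,b,c,d\}$ are $\{a,b\}$, $\{b,c\}$, $\{c,d\}$. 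The mechanism behind the obstruction is the matching generator $a_1b_1c_1d_1$ of $\MP(I(G)^2)$ coming from the disjoint edges $\{a,b\}$ and $\{c,d\}$: once $b_1,c_1$ are fixed in a face, this generator forbids $a_1$ and $d_1$ from occupying a common face of the link.

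Following \cref{e:P3} and \cref{l:AAAAA}, I would begin with the face
$$\pi=\{b,c\}_{(1)}\cup (V\setminus\{b,c\})_{(2)},$$
which is readily a face of $\Gamma^2_G$ since no generator of $\MP(I(G)^2)$ can divide $\bm_\pi$ when only $b_1,c_1$ occur among first copies. The key structural step, via \cref{t:facets}, is that every facet of $\Gamma^2_G$ containing $\pi$ is of star type, centered either at $b$ (with $c$ among its leaves) or at $c$ (with $b$ among its leaves): the independent type cannot occur because $b,c$ cannot both lie in an independent set, and the leaf and triangle types cannot occur because the second-copy data $Z$ of such a facet omits one, respectively three, vertices, which would then be forced into $\{b,c\}$, incompatible with $b,c$ having degree $\geq 2$ and with a cardinality count. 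A facet centered at $b$ then contains $c_2$ but not $b_2$, and a facet centered at $c$ contains $b_2$ but not $c_2$; moreover $a_1$ occurs only in $b$-centered facets and $d_1$ only in $c$-centered facets. Hence $\lk_{\Gamma^2_G}(\pi)$ splits into a $b$-side and a $c$-side whose only possible common vertices are first copies $v_1$ with $v$ in $G'=G\setminus N_G[\{b,c\}]$.

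If $G'$ contributes no such shared vertex (in particular when $N_G[\{b,c\}]=V$, which is the situation of \cref{l:AAAAA}), then $\lk_{\Gamma^2_G}(\pi)$ is already disconnected and we are done. In general I would remove the overlap by enlarging the face to
$$\pi^\ast=\pi\cup S_{(1)},$$
where $S$ is an independent set selected among the vertices of $G'$ that survive simultaneously in a maximal star at $b$ and a maximal star at $c$. Moving $S_{(1)}$ into the face deletes these vertices from the link, and a maximality argument for $S$, in the spirit of \cref{l:leaf}, should ensure that no first copy $v_1$ with $v\in G'$ appears on both sides of $\lk_{\Gamma^2_G}(\pi^\ast)$. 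The $b$-side and $c$-side are then vertex-disjoint, their distinguishing second copies being $c_2$ and $b_2$, and each side still carries a face of size at least two (namely $\{a_1,c_2\}$ and $\{d_1,b_2\}$); so the link is disconnected of positive dimension, ${\tilde H}_0(\lk_{\Gamma^2_G}(\pi^\ast);K)\neq 0$, and \cref{c:Reisner} finishes the proof.

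The hard part will be the construction of the third paragraph, namely isolating the correct augmentation set $S$. The difficulty is that $b$ and $c$ may have non-leaf neighbors, and the induced graph $G|_{N_G(b)}$ may admit several maximal independent sets, so a vertex of $G'$ can be absorbed into the closed neighborhood of one maximal star but survive in another. One must therefore choose the two maximal stars (equivalently, the set $S$) so that both the $b$-side and the $c$-side of the augmented link stay nonempty, while every vertex of $G'$ that could connect the two sides is either placed inside $\pi^\ast$ or excluded by adjacency to $S$. Proving that such a choice always exists, and that it genuinely separates $a_1$ from $d_1$, is the technical core of the argument.
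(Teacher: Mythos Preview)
Your strategy is essentially the paper's, organized slightly differently. The paper also chooses a maximal independent set of $G'=G\setminus N_G[\{b,c\}]$ (it calls it $A$; this is your $S$), forms the face $\sigma=A_{(1)}\cup(V\setminus U_A)_{(2)}$ with $U_A=\{v\in N_G[\{b,c\}]: N_G(v)\cap A=\emptyset\}$, and proves $\lk_{\Gamma^2_G}(\sigma)=\Gamma^2_{G_{U_A}}$; then \cref{l:AAAAA} disconnects the link of $\{b,c\}_{(1)}\cup(U_A\setminus\{b,c\})_{(2)}$ inside that smaller complex. Your $\pi^\ast$ is exactly $\sigma$ together with this inner face, so you are computing the same final link in one step instead of two.

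The part you flag as the ``hard part'' is not hard, and your worry about matching maximal stars is a red herring. Simply take $S\in\A(G')$. The mechanism that clears the overlap is not the star structure but the matching generator you already singled out: for every $v\in G'\setminus S$, maximality of $S$ yields $s\in S$ with $\{v,s\}\in E(G)$, and since $v,s\notin N_G[\{b,c\}]$ the edges $\{b,c\}$ and $\{v,s\}$ are disjoint, so $b_1c_1v_1s_1\in\MP(I(G)^2)$ kills $v_1$ from the link outright. No analysis of $G|_{N_G(b)}$ or of which star facets occur is needed. After this, the only first copies left in $\lk_{\Gamma^2_G}(\pi^\ast)$ are $u_1$ with $u$ adjacent to exactly one of $b,c$ and to no element of $S$; for $u$ on the $b$-side and $v$ on the $c$-side the disjoint edges $\{b,u\},\{c,v\}$ give the matching generator $b_1u_1c_1v_1$, and together with the star generators $u_1b_1b_2c_1$, $b_1c_1c_2v_1$ and the edge generator $b_1b_2c_1c_2$ this rules out every possible crossing edge. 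The leaves $a,d$ land on opposite sides and $\{a_1,c_2\},\{d_1,b_2\}$ are faces of the link, so it is disconnected of dimension at least $1$. (Incidentally, this direct route never invokes the reduction to the triangle-free case that the paper performs first.)
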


\begin{proof} Let $V$ denote the vertex set of $G$.
First of all, since the purity of $\Gamma^2_G$ is a necessary condition for the Cohen--Macaulayness of $I(G)^2$, by \cref{t:pure} one can assume that $G$ is triangle-free and unmixed.  Let $P_3$ be the path of length $3$, which is an induced subgraph of $G$, with the vertices $x,y,z,w$ and edges $zx,xy,yw$, where $z$ and $w$ are leaves of $G$.  
$$\begin{tikzpicture}[scale=.8]
\tikzstyle{point}=[inner sep=0pt]
\node[label=left:{\small$z$}] (z) at (0,0) {};
\node[label=below:{\small$x$}] (x) at (1,0) {};
\node[label=below:{\small$y$}] (y) at (2,0) {};
\node[label=right:{\small$w$}] (w) at (3,0) {};
\draw (z.center) -- (x.center);
\draw (x.center) -- (y.center);
\draw (y.center) -- (w.center);
% \node [point,label=left:{\small $x_1$}] at (0,1) {};
% \node [point,label=left:{\small $x_2$}] at (-1,0) {};
% \node [point,label=right:{\small $x_3$}] at (1,0) {};
\draw[black, fill=black] (x) circle(.05);
\draw[black, fill=black] (y) circle(.05);
\draw[black, fill=black] (z) circle(.05);
\draw[black, fill=black] (w) circle(.05);
\end{tikzpicture}
$$
Let 
\begin{equation}\label{e:U}
U = N_G[x] \cup N_G[y], \quad
A \in \A(G\setminus U), \quad  
U_A =\{v \in U \st N_G(v) \cap A = \emptyset\} \subseteq U.
\end{equation}
Observe that 
\begin{equation}\label{e:U0}
x,y,z,w \in  U_A, \quad 
V \setminus U_A= N_G[A], \quad
A \in \A(G \setminus U_A),
\end{equation}
and since $G$ is triangle-free, $G_{U_A}$ is a graph of the form described in \cref{l:AAAAA}.
With $A$ as in \eqref{e:U}, and using \cref{t:facets}, let 
$$\sigma = A_{(1)} \cup (V \setminus U_A)_{(2)} \in \Gamma^2_G.$$
Our goal is to show that $ {\lk}_{\Gamma^2_G}(\sigma)$ is not Cohen--Macaulay, which will then, by~\cref{c:Reisner}, imply that $\Gamma^2_G$ is not Cohen--Macaulay.

\smallskip

\noindent {\bf Claim~1:} {\it Let   $U' \subset U_A$ and $B \in \A(G_{U_{A}\setminus U'})$. Then $A \cup B \in \A(G\setminus U')$.}    

\smallskip

\noindent{\it Proof of Claim~1.}
First note that since $U' \subseteq U_A \subseteq U$, the graphs $G\setminus U$ and $G_{U_{A}\setminus U'}$  are disjoint subgraphs of $G\setminus U'$.
Suppose $A \in \A(G\setminus U)$ and $B \in \A(G_{U_{A}\setminus U'})$.  Let $v \in A$ and $v' \in B$.  Then $v' \in U_{A}$ and by \eqref{e:U} $\{v,v'\}$ cannot be an edge of $G$.  Hence $A \cup B$ is an independent set of $G\setminus U'$.  

We show that $A \cup B$ is a maximal independent set of $G\setminus U'$.  Let 
$v \in V\setminus \big ((A \cup B) \cup U' \big )$.  If $v \not\in U_{A}$, then by \eqref{e:U0} $A \cup \{v\}$ cannot be an independent set.  If $v \in U_{A}\setminus U'$, then $B \cup \{v\}$ cannot be an independent set.  Thus $(A \cup B)\cup \{v\}$ cannot be an independent set of $G\setminus U'$.  Hence $A \cup B \in \A(G\setminus U')$, as required.

\hfill {(\it End of Claim~1.)}

\smallskip

\noindent {\bf Claim~2:} ${\lk}_{\Gamma^2_G}(\sigma) = \Gamma^2_{G_{U_A}}$.  

\smallskip

\noindent{\it Proof of Claim~2.}

\noindent ($\subseteq$) We first show that for $v \in V$ and $i \in \{1,2\}$, if $v_i \in \lk_{\Gamma^2_G}(\sigma)$ then $v \in U_A$.
\begin{itemize}
    \item[$i=1$: ] In this case  $v_1\notin \sigma$ and $\sigma \cup \{v_1\} \in \Gamma^2_G$. If $v \notin U_A$, $v_2 \in \sigma$ and there is $v' \in A$ for which $\{v,v'\} \in E(G)$ by \eqref{e:U0}, and so $vv'\in I(G)$, so $v_1v_2v'_1v'
_2 \in \MP(I(G)^2)$, contradicting  $$\{v_1,v_2,v'_1, v'_2\} \subseteq \sigma \cup \{v_1\} \in \Gamma^2_G.$$

  \item[$i=2$: ] In this case $v_2 \notin \sigma$, and so  $v \in U_A$ by the construction of $\sigma$.
\end{itemize}

In other words, if $\sigma \cup \tau$ is a face of $\Gamma^2_G$ with $\sigma \cap \tau = \emptyset$, then $\tau \subset (U_A)_{(1)} \cup (U_A)_{(2)}$ and thus $\tau \in \Gamma^2_{G_{U_A}}$.  It then follows that $ {\lk}_{\Gamma^2_G}(\sigma) \subseteq \Gamma^2_{G_{U_A}}$.  

\noindent ($\supseteq$) Since 
$$\sigma 
\subseteq (V\setminus U)_{(1)} \cup (V\setminus U_A)_{(2)} 
\subseteq  (V\setminus U_A)_{(1)} \cup (V\setminus U_A)_{(2)}$$ and the vertex set of
$\Gamma^2_{G_{U_A}}$ is $(U_A)_{(1)} \cup (U_A)_{(2)}$, we can see that 
$\sigma \cap \Gamma^2_{G_{U_A}}=\emptyset$. Moreover, as 
$G$ is triangle-free, the facets of $\Gamma^2_{G_{U_A}}$ are of independent, star and leaf types.

 In order to prove that 
${\lk}_{\Gamma^2_G}(\sigma) \supseteq \Gamma^2_{G_{U_A}}$, we recall the facets $\tau$ of $\Gamma^2_{G_{U_A}}$ via \cref{t:facets}.
\begin{enumerate}
 \item{\it independent type:} $\tau=C_{(1)} \cup (U_A)_{(2)}$ where $C \in \A(G_{U_A}).$ 
By Claim~1, $A\cup C \in \A(G)$, and so 
$$ 
\sigma \cup \tau 
= A_{(1)} \cup (V \setminus U_A)_{(2)}  \cup C_{(1)} \cup ({U_A})_{(2)} 
=(A\cup C)_{(1)} \cup V_{(2)}
$$
is a facet of $\Gamma^2_G$ of independent type.
 
 \smallskip
 
 \item{\it leaf type:} $\tau=\{a,b\}_{(1)} \cup C_{(1)} \cup (U_A\setminus\{a\})_{(2)}$ where 
 $\{a,b\}$ is a leaf  of $G_{U_A}$ with free vertex $a$,  and 
  $$C \in \A\big ( G_{U_A} \setminus N_{G_{U_A}}[b] \big ).$$   
  %Since $a$ is a leaf and  $a \in U$, we see in particular that in this case  $b \in \{x,y\}$. 
By construction, $a$ is also a leaf of $G$, and  by Claim~1, $A\cup C \in \A(G \setminus N_{G_{U_A}}[b])$, and so 
$$ 
\begin{array}{ll}
\sigma \cup \tau&  
= A_{(1)} \cup (V \setminus U_A)_{(2)}  \cup \{a,b\}_{(1)} \cup C_{(1)} \cup (U_A\setminus\{a\})_{(2)}\\
&\\
&= (A\cup C)_{(1)} \cup   \{a,b\}_{(1)} \cup    (V\setminus\{a\})_{(2)}
\end{array}
$$
is a facet of $\Gamma^2_G$ of leaf type.

 \smallskip
 
 \item{\it star type:} $\tau =\{a^1,\ldots,a^t,b\}_{(1)} \cup C_{(1)} \cup (U_A\setminus\{b\})_{(2)}$ where 
 $G_{\{a^1,\ldots,a^t,b\}}$ is a star of $G_{U_A}$ centered at  $b \in U_A$, and 
 \begin{equation}\label{e:star0}
 C\in \A\big ( G_{U_A} \setminus N_{G_{U_A}}[\{a^1,\ldots,a^t,b\}]  \big ).
 \end{equation}
  % It follows in particular from \eqref{e:U0} and \eqref{e:star0} that also $a^1,\ldots, a^t \in U_A$.
  Again by construction $\{a^1,\ldots,a^t,b\}$ is a star of $G$, and 
   by Claim~1, $A\cup C \in  \A(G \setminus N_{G_{U_A}}[\{a^1,\ldots,a^t,b\}])$. Then 
$$ 
\begin{array}{ll}
\sigma \cup \tau& 
= A_{(1)} \cup (V \setminus U_A)_{(2)}  \cup \{a^1,\ldots,a^t,b\}_{(1)} \cup C_{(1)} \cup (U_A\setminus\{b\})_{(2)}\\
&\\
&=(A\cup C)_{(1)} \cup \{a^1,\ldots,a^t,b\}_{(1)}  \cup    (V\setminus\{b\})_{(2)}
\end{array}$$
is contained in a facet of $\Gamma^2_G$ of star type.
\end{enumerate}
As demonstrated above, $\sigma \cup \tau$ is always a facet of $\Gamma^2_G$.  Hence $\Gamma^2_{G_{U_A}} \subset  {\lk}_{\Gamma^2_G}(\sigma)$, as required. 
\hfill {(\it End of Claim~2.)}

\smallskip

Finally, \cref{l:AAAAA} guarantees that
${\lk}_{\Gamma^2_G}(\sigma) = \Gamma^2_{G_{U_A}}$ is not Cohen--Macaulay, as desired.
% it follows from Reisner's criterion that $\Gamma^2_G$ itself cannot be Cohen--Macaulay, as desired. 
\end{proof}

Let $G$ be a finite graph on the vertices $x_1, \ldots, x_n$.  The {\bf whiskered graph}~\cite{V} based on $G$ is the finite graph on $x_1, \ldots, x_n, y_1, \ldots, y_n$ whose edges are the edges of $G$ together with $\{x_1, y_1\}, \ldots, \{x_n, y_n\}$; see \cref{fig:whisker} for an example. Edge ideals of whiskered graphs are always Cohen-Macaulay~\cite{V}.  

\begin{figure}[ht!]
\begin{tabular}{cc}
\begin{tikzpicture}[scale=1]
\tikzstyle{point}=[inner sep=0pt]
\node (a)at (0,1) {};
\node (b) at (-1,0) {};
\node (c) at (1,0) {};
\draw (a.center) -- (b.center);
\draw (a.center) -- (c.center);
\draw (b.center) -- (c.center);
\node [point,label=left:{\small $x_1$}] at (0,1) {};
\node [point,label=left:{\small $x_2$}] at (-1,0) {};
\node [point,label=right:{\small $x_3$}] at (1,0) {};
\draw[black, fill=black] (a) circle(.05);
\draw[black, fill=black] (b) circle(.05);
\draw[black, fill=black] (c) circle(.05);
\end{tikzpicture}
& 
\begin{tikzpicture}[scale=1]
\tikzstyle{point}=[inner sep=0pt]
\node [point,label=left:{\small $x_1$}](a)at (0,1) {};
\node [point,label=left:{\small $x_2$}](b) at (-1,0) {};
\node [point,label=right:{\small $x_3$}](c) at (1,0) {};
\node [point,label=left:{\small $y_1$}](a') at (0,2.5) {};
\node [point,label=left:{\small $y_2$}](b') at (-1,1.5) {};
\node [point,label=right:{\small $y_3$}](c') at (1,1.5) {};

\draw (a.center) -- (b.center);
\draw (a.center) -- (c.center);
\draw (b.center) -- (c.center);

\draw (a.center) -- (a'.center);
\draw (b.center) -- (b'.center);
\draw (c.center) -- (c'.center);

\draw[black, fill=black] (a) circle(.05);
\draw[black, fill=black] (b) circle(.05);
\draw[black, fill=black] (c) circle(.05);
\draw[black, fill=black] (a') circle(.05);
\draw[black, fill=black] (b') circle(.05);
\draw[black, fill=black] (c') circle(.05);
\end{tikzpicture}
\\
Graph $G$
 &  
Whiskering of $G$
\end{tabular}
\caption{}\label{fig:whisker}
\end{figure}

 A direct consequence of  \cref{t:nice} is the following.

\begin{corollary}[{\bf Cohen--Macaulay graphs which do not have Cohen--Macaulay squares}]\label{c:aaaaa}
Let $G$ be a simple graph with more than one edge. Then $I(G)^2$  is not Cohen--Macaulay in the following cases:
\begin{enumerate}
\item if $G$ is a whiskered graph; 
\item if $G$ is a tree;
\item if $G$ is a connected chordal graph; 
\item if $G$ is a connected Cohen--Macaulay bipartite graph.
\end{enumerate}
\end{corollary}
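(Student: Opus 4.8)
The plan is to reduce every case to \cref{t:nice} by producing, in each family, a connected graph $G$ that contains an induced path $z-x-y-w$ of length $3$ whose endpoints $z,w$ are leaves of $G$. It is worth noting that \cref{t:nice} already absorbs the purity obstruction: its proof first disposes of the non-pure situation via \cref{t:pure}, so as a black box it yields the implication ``$G$ connected and carrying such an induced $P_3$ $\Rightarrow$ $I(G)^2$ not Cohen--Macaulay.'' Thus the whole argument comes down to (i) locating the induced $P_3$ when one is available, and (ii) falling back directly on \cref{t:pure} in the degenerate families (stars, triangles) where no such $P_3$ exists.

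I would first clear the two easy reductions. For a whiskered graph $G$ (which is connected exactly when its base is), having more than one edge forces the base to have at least two vertices and hence, by connectivity, an edge $\{x_i,x_j\}$; the attached whiskers $y_i,y_j$ are leaves of $G$ and $\{y_i,x_i,x_j,y_j\}$ spans the induced path $y_i-x_i-x_j-y_j$ since a whisker meets only its foot, so \cref{t:nice} applies. For a connected chordal $G$: if $G$ contains a triangle then $\Gamma^2_G$ is not pure by \cref{t:pure}, hence not Cohen--Macaulay by \cref{c:Reisner}; otherwise $G$ is triangle-free and chordal, so it has no induced cycle at all and is therefore a tree. The chordal case thus collapses into the tree case.

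For a tree $T$ with more than one edge I would split on unmixedness. If $T$ is not unmixed, then (being triangle-free) $\Gamma^2_T$ is not pure by \cref{t:pure} and we are done. If $T$ is unmixed, I would use the standard description of unmixed (equivalently Cohen--Macaulay) trees as trees admitting a perfect matching by leaf edges: pick a matched leaf edge $\{z,x\}$ with leaf $z$; since $T$ has more than one edge its non-leaf mate $x$ has a neighbour $y\neq z$, and $y$ is in turn matched to a leaf $w$ (its partner cannot be the non-leaf $y$). Then $z-x-y-w$ is induced with leaf endpoints $z,w$, and \cref{t:nice} finishes the tree (and hence chordal) case.

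The crux is the connected Cohen--Macaulay bipartite graph, where two leaves need not lie near one another and a more structural input is required. Here I would invoke the Herzog--Hibi description \cite{HHbipartite}: such a $G$ is the graph $G(P)$ of a finite connected poset $P=\{p_1,\dots,p_n\}$ with parts $\{x_i\}$, $\{y_j\}$ and edges $\{x_i,y_j\}$ exactly when $p_i\le p_j$; under this dictionary $y_j$ is a leaf iff $p_j$ is minimal and $x_i$ is a leaf iff $p_i$ is maximal, and connectivity of $G$ is connectivity of $P$. Because $G$ is connected with more than one edge, $P$ has at least two elements and a comparability, which I would chase down to a minimal element $p_a$ and up to a maximal element $p_c$ with $p_a<p_c$. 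Then $y_a-x_a-y_c-x_c$ is the required path: the edges $\{x_a,y_a\}$, $\{x_a,y_c\}$, $\{x_c,y_c\}$ come from $p_a\le p_a$, $p_a<p_c$, $p_c\le p_c$; the only possible extra adjacency $\{y_a,x_c\}$ is absent since $p_c\not\le p_a$; and $y_a,x_c$ are leaves by minimality of $p_a$ and maximality of $p_c$. Applying \cref{t:nice} completes this last case. I expect this bipartite case to be the main obstacle, precisely because the poset translation is what lets one choose all four path-vertices simultaneously rather than reading off two accidental nearby leaves.
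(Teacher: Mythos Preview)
Your proof is correct and follows essentially the same strategy as the paper: in each family locate an induced $P_3$ with leaf endpoints and invoke \cref{t:nice}, falling back on \cref{t:pure} when no such path exists. The only cosmetic difference is in case~(2): you build the $P_3$ directly from the perfect-matching-by-leaf-edges description of unmixed trees, whereas the paper simply quotes that an unmixed tree \emph{is} a whiskered graph and recycles case~(1); these are two phrasings of the same structural fact. Your treatment of the Cohen--Macaulay bipartite case via the Herzog--Hibi poset dictionary is exactly the paper's argument, and your explicit verification that the path is induced and that its endpoints are leaves is in fact cleaner than the paper's somewhat terse version.
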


\begin{proof}
Since $G$ has $2$ or more edges, if  $G$ is whiskered, then it must contain at least two whiskers which along with the edge connecting them satisfy the conditions of \cref{t:nice}. Then $I(G)^2$ is not Cohen--Macaulay by \cref{t:nice}.
Every unmixed tree is a whiskered graph~(\cite{V,HHgtm260}), and a connected chordal graph with no triangle (because of \cref{t:pure}) is a tree, so the remaining two statements follow.

Finally, let $G$ be a connected Cohen--Macaulay bipartite graph.  It follows from \cite{HHbipartite} that $G$ comes from a finite connected partially ordered set $P = \{x_1, \ldots, x_n\}$.  Then $V(G) = \{y_1, \ldots, y_n\} \cup \{z_1, \ldots, z_n\}$ and $y_iz_j$ is an edge of $G$ if and only if $x_i \geq x_j$.  Since $G$ has at least two edges, one can assume that $n \geq 2$.  Let $x_{i_0}$ be a minimal element of $P$ and $x_{j_0}$ a maximal element with $x_{i_0} < x_{j_0}$.  Then
\[
z_{i_0}y_{i_0}, y_{i_0}z_{j_0}, z_{j_0}y_{j_0}  
\]
is an induced path of length $3$.  Hence $I(G)^2$ cannot be Cohen--Macaulay.
\end{proof}

A natural question is if one could relax the Cohen--Macaulay condition in \cref{c:aaaaa}(4). 

\begin{question}
    Which connected unmixed bipartite graphs (\cite[pp.~163]{HHgtm260}) have Cohen--Macaulay squares?  
\end{question}

The next example examines some such graphs.

\begin{example}
The complete bipartite $K_{n,n}$ on $2n$ vertices is unmixed with no triangle.  However, $I(K_{n,n})^2$ is not Cohen--Macaulay unless $n=1$.  The complete bipartite $K_{m,n}$ on $m+n$ vertices cannot be unmixed unless $m=n$.  In particular, the complete bipartite $K_{m,n}$ on $m+n$ vertices has a Cohen--Macaulay square if and only if $m=n=1$. 
\end{example}

\begin{example}
If $G$ is a Cameron--Walker graph \cite{CWgraph}, then $I(G)^2$ cannot be Cohen--Macaulay.  In fact, every unmixed Cameron--Walker graph contains a triangle.
\end{example}


\begin{thebibliography}{99}
\bibitem{M2}
Daniel R. Grayson and Michael E. Stillman, 
{\it Macaulay2, a software system for research in algebraic geometry,} 
available at \texttt{http://www2.macaulay2.com}.

\bibitem{HHbipartite} 
J.~Herzog and T.~Hibi, 
{\it Distributive lattices, bipartite graphs and Alexander duality,}
J. Algebraic Combin. {\bf 22} (2005), 289--302.

\bibitem{HHgtm260} 
J.~Herzog and T.~Hibi, 
{\it Monomial Ideals,} 
GTM 260, Springer, (2011).

\bibitem{HHZchordal}
J.~Herzog, T.~Hibi and X.~Zheng, 
{\it Cohen--Macaulay chordal graphs,} 
J. Combin. Theory, Ser. A, {\bf 113} (2006), 911--916. 

\bibitem{CWgraph}
T.~Hibi, A.~Higashitani, K.~Kimura and A.~B.~O'Keefe,  
{\it Algebraic study on Cameron--Walker graphs,}
J. of Algebra, {\bf 422} (2015), 257--269.

\bibitem{HT}
D.~T.~Hoang and N.~V.~Trung, A characterization of triangle-free Gorenstein graphs and 
Cohen--Macaulayness of second powers of edge ideals, {\it J. of Algebraic Combinatorics} {\bf 43} (2016), 325--338.

\bibitem{HRT}
D.~T.~Hoang, G.~Rinaldo, and N.~ Terai, 
Cohen--Macaulay and (S$_2$) properties of the second power of squarefree monomial ideals, {\it Mathematics} {\bf 7} (2019), $\sharp$\,684.

\bibitem{Rei}
G.~Reisner, 
{\it Cohen--Macaulay quotients of polynomial rings,}
Advances in Math., {\bf 21} (1976), 30--49.

\bibitem{RTY}
G.~Rinaldo, N.~Terai and K.~Yoshida, {\it On the second powers of Stanley--Reisner ideals}, J. Commut. Algebra {\bf 3} (2011), 405--430.

\bibitem{Sta}
R.~P.~Stanley, 
{\it The upper bound conjecture for spheres and Cohen--Macaulay rings,}
Stud. in Appl. Math., {\bf 54} (1975), 135--142.

\bibitem{V}
 R.~H.~Villarreal, 
 {\it Cohen--Macaulay graphs,}
 Manuscripta Math. 66 (1990), no. 3, 277--293.




\end{thebibliography}
\end{document}